\let\OLDthebibliography\thebibliography
\renewcommand\thebibliography[1]{
	\OLDthebibliography{#1}
	\setlength{\parskip}{0pt}
	\setlength{\itemsep}{0pt plus 0.3ex}
}
\newtheorem{Theorem}{Theorem}
\newtheorem{Lemma}{Lemma}
\newtheorem{Proposition}{Proposition}
\newcommand{\sm}{\!\setminus\!}
\newcommand{\R}{\mathbb R}
\newcommand{\N}{\mathbb N}
\newcommand{\x}{\mathbf x}
\newcommand{\y}{\mathbf y}
\newcommand{\z}{\mathbf z}
\newcommand{\bbb}{\mathbf b}
\newcommand{\rr}{\mathbf r}
\newcommand{\M}{\mathcal M}
\newcommand{\pot}{2}
\newcommand{\G}{\mathcal G}
\newcommand{\V}{\mathbf v}
\newcommand{\W}{\mathbf w}
\newcommand{\U}{\mathbf u}
\begin{document}

\title{Monochromatic infinite sets in Minkowski planes}
\author{
	N\'ora Frankl\thanks{The Open University, Milton Keynes, UK, and Alfr\'ed R\'enyi Institute of Mathematics, Budapest, Hungary. \newline Email:~\href{mailto:nora.frankl@open.ac.uk}{\tt nora.frankl@open.ac.uk}.}
	\and
	Panna Geh\'er\thanks{E\"otv\"os Lor\'and University, Budapest, Hungary,  and Alfr\'ed R\'enyi Institute of Mathematics, Budapest, Hungary. \newline Email:~\href{mailto:geher.panna@ttk.elte.hu}{\tt geher.panna@ttk.elte.hu}.}
	\and
	Arsenii Sagdeev\thanks{KIT, Karlsruhe, Germany and Alfr\'ed R\'enyi Institute of Mathematics, Budapest, Hungary. Email:~\href{mailto:sagdeevarsenii@gmail.com}{\tt sagdeevarsenii@gmail.com}.}
	\and
	G\'eza T\'oth\thanks{Alfr\'ed R\'enyi Institute of Mathematics, Budapest, Hungary. Email:~\href{mailto:geza@renyi.hu}{\tt geza@renyi.hu}.}
}

\maketitle

\begin{abstract}
	We prove that for any  $\ell_p$-norm in the plane with $1< p< \infty$ and for every infinite $\M \subset \R^2$, there exists a two-colouring of the plane such that no isometric copy of $\M$ is monochromatic. On the contrary, we show that for every polygonal norm (that is, the unit ball is a polygon) in the plane, there exists an infinite $\M \subset \R^2$ such that for every two-colouring of the plane there exists a monochromatic isometric copy of $\M$.
\end{abstract}

\textbf{Key words:} Euclidean Ramsey theory, chromatic number, colouring of the plane, monochromatic subsets\\

\textbf{Mathematics Subject Classification:} 05C15, 05D10, 52C10

\section{Introduction} \label{sec1}

The following question of Nelson from 1950 has greatly influenced modern combinatorial geometry:
what is the minimum number of colours $\chi(\R^2)$ needed to colour $\R^2$ such that no two points unit Euclidean distance apart are of the same colour? Despite all the attention it received, this seventy-year-old problem is still open. The state of the art is limited by the inequalities $5 \le \chi(\R^2) \le 7$. The lower bound is relatively recent, obtained first by de Grey~\cite{degrey} and shortly afterwards by Exoo and Ismailescu \cite{Exoo} with a different proof. For more about the problem, see a survey in Soifer's book~\cite{Soifer}.
We consider generalisations of the original problem, where we impose different restrictions on the colouring.

The $n$-dimensional space $\R^n$, together with a norm 
$N$ is called a {\em Minkowski space} (or Minkowski plane, for $n=2$) \cite{MSW01}. 
The \textit{$N$-distance} between $\x, \, \y \in \R^n$ is denoted by $\|\x-\y\|_N$.  For $r>0$, the set $\{\y \in \R^n: \|\y-\x\|_N\le r\}$ is called the \textit{$N$-ball of radius $r$ centered at $\x$} and is denoted by $B_N(\x,r)$. The norm $N$ is called \textit{polygonal} if its unit ball is a polytope. It is  called {\it strictly convex} if the unit $N$-ball is strictly convex. In other words, $N$ is strictly convex if for each $\x, \, \y \in \R^n$ such that $\|\x\|_N=\|\y\|_N=1$ and for all $0 < \lambda < 1$, we have $\|\lambda \x + (1- \lambda) \y\|_N<1$. Note that this is equivalent to the the fact that the equality $\|\x-\y\|_N+\|\y-\z\|_N=\|\x-\z\|_N$ implies that $\x$, $\y$ and $\z$ are collinear.
Given a subset $\M \subset \R^n$, we call $\M' \subset \R^n$ an {\it $N$-isometric copy of $\M$} if there exists a bijection $f: \M \to \M'$ such that $\|\x-\y\|_N = \|f(\x)-f(\y)\|_N$ for all $\x, \, \y \in \M$. We emphasise that unless $N$ is strictly convex, not all $N$-isometric copies of a collinear set are collinear.
Finally, let the \textit{chromatic number} $\chi(\R^n, N, \M)$ be the minimum number of colours needed to colour $\R^n$ such that no $N$-isometric copy of $\M$ is monochromatic. In the simplest case when the forbidden configuration $\M$ is a two-point set, we denote this chromatic number by $\chi(\R^n, N)$ for shorthand.

A systematic study of these quantities in the Euclidean space, that is, for the $\ell_2$-norm, dates back to the works of Erd\H{o}s, Graham, Montgomery, Rothschild, Spencer, and Straus~\cite{EGMRSS1, EGMRSS2, EGMRSS3}. For further history and details we refer to Graham's survey~\cite{Graham2017}. Research so far, e.g.~\cite{FrRod, KSZ, Leader} mostly focused on \emph{finite} configurations for the following reason.
For every dimension $n \in \N$ and every \textit{infinite} $\M \subset \R^n$, we have $\chi(\R^n, \ell_2, \M)=2$, see~\cite[Theorem~19]{EGMRSS2}. 
We generalise this result for all strictly convex $\ell_p$-norms in the plane. Recall that for $\x = (x_1, \, \dots, \, x_n) \in \R^n$, its \textit{$\ell_p$-norm} is given by $\|\x\|_p = (\sum_{i=1}^{n}|x_i|^p)^{1/p}$ for each $1 \le  p < \infty$, and by $\|\x\|_\infty = \max_{i}|x_i|$ in case $p=\infty$. Observe that the $\ell_1$- and $\ell_\infty$-norms are polygonal, while $\ell_p$-norms are strictly convex for all $1 < p < \infty$.

\begin{Theorem} \label{th_lp_plane}
	Let $1 < p < \infty$ and $\M \subset \R^2$ be infinite. Then we have $\chi(\R^2, \ell_p, \M)=2$. In other words, there exists a two-colouring of the plane such that no $\ell_p$-isometric copy of $\M$ is monochromatic. 
\end{Theorem}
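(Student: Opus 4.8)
The plan is to find a two-colouring of the plane (a partition into two colour classes) such that every $\ell_p$-isometric copy of $\M$ contains points of both colours. Since $\M$ is infinite, it is natural to extract from $\M$ a well-structured infinite subconfiguration and forbid that instead — it suffices to two-colour so that no monochromatic copy of a fixed infinite subset $\M_0 \subseteq \M$ exists, because any copy of $\M$ restricts to a copy of $\M_0$. The first step is therefore to analyse the geometry of $\M$ and isolate a tractable subset. Two regimes suggest themselves: either $\M$ has infinitely many distinct pairwise distances, or $\M$ takes only finitely many distinct distance values. The key feature exploited in the Euclidean case is the existence of an infinite sequence of points going off to infinity with a controlled (e.g.\ monotone or rapidly growing) sequence of mutual distances, which one then kills with an annulus-type colouring.

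The main tool I would use is a \emph{radial} (annular) colouring. Fix an origin $\mathbf o$ and colour each point $\x$ according to which band $[t_k, t_{k+1})$ the value $\|\x - \mathbf o\|_p$ falls into, using a rapidly increasing sequence $t_0 < t_1 < t_2 < \cdots$ and alternating colours on consecutive bands. The idea is that for a suitably chosen band structure, any isometric copy of $\M$ must place two of its points into bands of different colours, because the diameter constraints coming from the distances realised within $\M$ cannot be simultaneously satisfied inside a single monochromatic region. Concretely, if $\M_0 = \{\mathbf p_0, \mathbf p_1, \dots\}$ is chosen so that the distances $\|\mathbf p_0 - \mathbf p_k\|_p$ grow fast enough, then under any isometry the images of the $\mathbf p_k$ must spread across arbitrarily many annuli, and by the pigeonhole/alternation argument two of them land in oppositely coloured bands. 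Strict convexity of $\ell_p$ for $1 < p < \infty$ enters here: it guarantees that the metric behaves rigidly enough (e.g.\ triangle inequality is strict off collinear triples, balls are strictly convex, so spheres meet lines in controlled ways) that the distances within a copy of $\M$ genuinely force the claimed geometric spread.

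The hard part, and the step I expect to require the most care, is verifying that \emph{no} isometric copy of $\M_0$ can be monochromatic \emph{regardless of where it is placed and how it is oriented} in the plane — in particular, copies whose "centre" is far from $\mathbf o$, where the annular bands look locally almost like parallel strips rather than concentric circles. A single radial colouring may fail for copies positioned so that all their points happen to lie within one band; to defend against this one typically needs the band widths $t_{k+1} - t_k$ to be chosen adaptively (growing or shrinking in a way dictated by the distance set of $\M$) and possibly a second, independent colouring combined with the first so that a copy escaping one colouring is caught by the other. I would split into cases according to whether $\M$ is bounded or unbounded: if $\M$ is unbounded, use the growth of distances directly; if $\M$ is bounded but infinite, it has an accumulation point, and I would instead exploit arbitrarily small distances in $\M$ together with a fine-scale periodic or annular colouring whose band widths tend to zero, so that some pair of nearby points of every copy straddles a colour boundary.

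Finally, I would assemble these pieces: reduce $\M$ to $\M_0$, construct the explicit colouring (a radial alternating colouring with carefully tuned radii, or a superposition of two such colourings), and then verify the no-monochromatic-copy property by a distance/pigeonhole argument that leans on strict convexity to rule out the degenerate placements. The cleanest outcome would be a single colouring handling all infinite $\M$ uniformly; if that proves elusive, the case division (unbounded versus having an accumulation point) gives a robust fallback, matching the expectation that $\chi(\R^2, \ell_p, \M) = 2$ is achieved by an essentially one-bit geometric partition of the plane.
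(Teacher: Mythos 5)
Your treatment of the unbounded case is essentially the paper's Proposition~\ref{unboundedM}: nested $N$-balls with rapidly growing radii, alternating (in fact $\N$-many) colours on the rings, and the observation that the images of a sequence of points escaping to infinity must spread across the rings. That part is fine.

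The bounded case is where the proposal breaks down, and this is precisely the case the paper calls ``substantially harder''. Your plan there --- an annular or periodic colouring with band widths tending to zero, so that ``some pair of nearby points of every copy straddles a colour boundary'' --- cannot work, for a concrete reason. Let $\M$ be bounded and infinite, so it has an accumulation point; any isometric copy $\M'$ has an accumulation point $\y'$, and all but finitely many points of $\M'$ lie within an arbitrarily small neighbourhood of $\y'$. For any colouring by bands of positive width, a generic placement puts $\y'$ in the interior of a single band, so the entire tail of $\M'$ receives one colour; whether the copy is monochromatic then hinges on finitely many exceptional points, and no pigeonhole on band widths controls those for all rigid placements simultaneously. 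Worse, if some colour class contains a line segment of length $\mathrm{diam}(\M)$ (as happens for coarse bands), and $\M$ happens to be collinear, that class already contains a monochromatic copy. No explicit ``geometric'' two-colouring is known to work here even for $\ell_2$; the classical result of Erd\H{o}s et al.\ is non-constructive.

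What is missing from your proposal is the actual engine of the proof: a hypergraph reformulation and the Erd\H{o}s--Hajnal theorem, which gives a polychromatic $\N$-colouring of any hypergraph with infinite edges pairwise intersecting in fewer than $k$ vertices. To apply it, the paper first passes (Lemma~\ref{distDist}) to an infinite subset of $\M$ with pairwise distinct distances, then splits according to whether $\M$ contains infinitely many collinear points (strict convexity makes copies of a collinear set collinear, and three points determine the copy, so $k=3$ works) or contains an infinite subset in general position. In the latter case two distinct copies sharing many points force those shared points onto a common non-linear $\ell_p$-bisector, and Garibaldi's theorem --- that two distinct non-linear $\ell_p$-bisectors in the plane share fewer than $36\cdot 2^{36}$ points --- supplies the intersection bound needed for (a strengthened form of) Erd\H{o}s--Hajnal. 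This is also the only place where $1<p<\infty$ and the planarity genuinely enter; your proposal invokes strict convexity only as a vague rigidity heuristic and never supplies a substitute for this bisector analysis.
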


The $\ell_\infty$-norm is distinguished from the strictly convex ones in this context for example by the following two properties
of infinite \textit{geometric progressions} $	\G(q) \coloneqq \{0, \, 1, \, 1+q, \, 1+q+q^2, \, \dots\} $
obtained in~\cite[Section~2.3]{FKS}. First, for every $n \in \N$, we have $\chi\big(\R^n, \ell_\infty, \G(q)\big) = n+1$ whenever $q$ is sufficiently small in terms of $n$. Second, for each positive $q<1/32$, we have $\chi\big(\R^n, \ell_\infty, \G(q)\big) > \log_3n$ for all $n \in \N$. Our second result extends the former property to all polygonal norms in the plane.

\begin{Theorem} \label{th_polygon}
	Let $N$ be a polygonal norm in the plane, and $\lambda$ be the smallest $N$-distance between two consecutive vertices of its unit disc. Then for each positive $q < \lambda/(1+\lambda)$, we have $\chi\big(\R^2, N, \G(q)\big) \ge 3$. In other words, for every two-colouring of the plane, there exists a monochromatic $N$-isometric copy of $\G(q)$.
\end{Theorem}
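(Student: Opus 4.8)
The plan is to exploit the one feature that distinguishes a polygonal norm from a strictly convex one: the unit circle contains flat edges, along which $N$ is additive. Concretely, fix an edge $[\mathbf{a},\mathbf{b}]$ of the unit disc joining two consecutive vertices with $\|\mathbf{a}-\mathbf{b}\|_N=\lambda$ minimal, and record the additivity lemma: for all $\alpha,\beta\ge 0$ (not both zero) the ray through $\alpha\mathbf{a}+\beta\mathbf{b}$ meets the relative interior of this edge, so $\|\alpha\mathbf{a}+\beta\mathbf{b}\|_N=\alpha+\beta$. Now build a copy of $\G(q)=\{x_0,x_1,\dots\}$ by choosing points $P_0,P_1,\dots$ whose consecutive differences are $P_{k+1}-P_k=q^k\big(\lambda_k\mathbf{a}+(1-\lambda_k)\mathbf{b}\big)$ for arbitrary bend parameters $\lambda_k\in[0,1]$. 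Since every partial sum $P_j-P_i$ is again a nonnegative combination of $\mathbf{a},\mathbf{b}$ with coefficient-sum $\sum_{i\le k<j}q^k=x_j-x_i$, the additivity lemma makes $\{P_k\}$ an $N$-isometric copy of $\G(q)$ for \emph{every} choice of bends. Thus I obtain one free real parameter per step, and the whole problem reduces to steering these bends so that all $P_k$ receive one colour.

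To see what the bends buy, write everything in the (affine) basis $(\mathbf{b},\mathbf{e})$ with $\mathbf{e}=\mathbf{a}-\mathbf{b}$, so that $P_k=P_0+S_k\mathbf{b}+T_k\mathbf{e}$ with $S_k=\sum_{j<k}q^j=x_k$ forced, and $T_k=\sum_{j<k}q^j\lambda_j$ a nondecreasing sequence whose increments satisfy $T_{k+1}-T_k\le q^k$ (and $T_1\le 1$). Hence the $\mathbf{b}$-coordinates are rigid — the points sit on prescribed parallel lines $\ell_k$ accumulating at the line $\{\mathbf{b}\text{-offset}=L\}$, $L=1/(1-q)$ — while along $\mathbf{e}$ the point may drift monotonically, with total drift of the tail $P_1,P_2,\dots$ bounded by $\sum_{j\ge 1}q^j=q/(1-q)=:R$. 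The hypothesis $q<\lambda/(1+\lambda)$ is exactly the inequality $R<\lambda$: the entire tail is confined to an $\mathbf{e}$-window strictly shorter (on the $\lambda$-scale) than a single edge, whereas the first unit step alone can place $P_1$ anywhere on the length-$\lambda$ segment $P_0+[\mathbf{b},\mathbf{a}]$ of points at $N$-distance $1$ from $P_0$. So the task becomes a one-dimensional, slowly varying colour-selection problem: choose $P_0$ and a monotone budgeted sequence $(T_k)$ realizing a single colour on all $\ell_k$.

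I would carry out the selection by separating the long first step from the self-similar tail, using $\G(q)=\{0\}\cup\big(1+q\,\G(q)\big)$: the tail $\{1,1+q,\dots\}$ is itself a $q$-scaled copy of $\G(q)$, which lets me set up a descent across the scales $q^k$. At the top I use the width-$\lambda$ freedom of the first step to choose both the target colour $i$ and a starting point $P_1$ of colour $i$ from which the tail can be launched; I then extend greedily, maintaining the invariant that the current point admits a colour-$i$ continuation inside the shrinking reachable segment $P_k+q^k[\mathbf{b},\mathbf{a}]$. The inequality $R<\lambda$, together with the minimality of $\lambda$ over all edges, is what keeps the accumulated tail drift inside the room afforded by a single edge, so that the descent never exhausts its budget and a monochromatic accumulation point can be reached.

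The main obstacle is precisely this non-stuckness of the descent for an \emph{arbitrary} two-colouring. Unlike the $\ell_2$-avoidance constructions, here I cannot appeal to continuity, measure, or density of the colour classes: on each line $\ell_k$ the correctly coloured positions may form an arbitrary set, and the segments $P_k+q^k[\mathbf{b},\mathbf{a}]$ shrink to $N$-length $q^k\lambda\to 0$ near the accumulation point. I therefore expect the crux to be a robust combinatorial argument showing that the bending slack $R<\lambda$ forces, at every stage and for at least one colour, a usable continuation — for instance a pigeonhole/extremal argument on the shortest edge guaranteeing that whenever one colour blocks an entire reachable segment, the other colour opens up enough room to continue the copy in that colour instead. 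Converting this heuristic into a construction that provably runs forever — equivalently, threading the finite monochromatic truncations into a single infinite monochromatic copy — is the heart of the proof.
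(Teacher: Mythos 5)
Your setup is sound and matches the paper's starting point: additivity of the norm along an edge of the unit polygon, the observation that any nonnegative combination of the two endpoints of an edge has $N$-norm equal to the coefficient sum, and the identification of $q<\lambda/(1+\lambda)$ with $q/(1-q)<\lambda$ (tail drift smaller than one edge length). But the proof stops exactly where you say "the heart of the proof" lies, and that gap is genuine: a greedy, scale-by-scale extension of a \emph{single} copy cannot be pushed through. At stage $k$ the set of admissible continuations is a segment of $N$-length $q^k\lambda_{k}\to 0$, and an adversarial colouring can make every point of that segment the wrong colour; no local pigeonhole on the shortest edge rescues this, because "the other colour opens up enough room" is a statement about a \emph{different} segment in a \emph{different} direction, not about a continuation of the copy you are currently building. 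Moreover, by fixing a single shortest edge $[\mathbf a,\mathbf b]$ at the outset you lose the ability to follow where the obstruction actually sends you: when a red copy with direction $k$ cannot be extended, the resulting entirely blue segment is parallel to $\W^{k}$, and a copy inscribed into \emph{that} segment has direction $k(\W^{k})$, generally a different edge.

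The paper's resolution is global rather than local, and proceeds in steps you would still need to supply. First, one shows directly that some segment is monochromatic (otherwise every tiny segment $J_i$ around the $i$-th point of a straight embedding contains a red point, and these red points already form a red copy of $\G(q)$). This makes well-defined the minimal $s$ such that a monochromatic copy of $q^{s}\G(q)$ exists; after rescaling one may take $s=1$, i.e.\ there is a red copy of $\G(q)\sm\{0\}$ but no monochromatic copy of $\G(q)$ itself. The extension lemma then says that the set of points completing a copy of $\G(q)\sm\{0\}$ to a copy of $\G(q)$ is an entire translated edge of length $\lambda_{k}\ge\lambda>q/(1-q)$; since none of its points is red, it is entirely blue, one inscribes a blue copy of $\G(q)\sm\{0\}$ into it, and iterates, producing monochromatic segments $I_0,I_1,\dots$ with directions $k_0,\,k_1=k(\W^{k_0}),\dots$. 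By pigeonhole two of these directions coincide, and because each application of the extension lemma is translation-equivariant, sliding the inscribed copy along $I_{j_1}$ slides the resulting segment along itself, lengthening the monochromatic segment by $\lambda_k-q/(1-q)>0$ at each round. After finitely many rounds one obtains a monochromatic segment of length exceeding $1/(1-q)$, which contains a full monochromatic copy of $\G(q)$ --- a contradiction. None of these ingredients (the initial monochromatic segment, the minimal truncation level and rescaling, the colour-alternating segments, the direction pigeonhole, and the segment-lengthening argument) is present in your proposal, so as written it establishes only the ambient framework, not the theorem.
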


\noindent {\bf Paper outline.} In the next two sections, we prove Theorems~\ref{th_lp_plane} and~\ref{th_polygon}, respectively. In Section~\ref{sec:conc}, we discuss the limitations of our tools and state several open problems.

\section{Strictly convex \texorpdfstring{$\ell_p$}{lp}-norms -- Proof of Theorem~\ref{th_lp_plane}} \label{sec2}

To prove Theorem \ref{th_lp_plane}, we reformulate it using the terminology of hypergraphs.

\subsection{Notation and preliminaries}

As usual, the \textit{chromatic number} of a hypergraph $\mathcal{H}=(V,E)$ is defined as the minimum number of colours needed to colour its vertices such that no edge is monochromatic. A function $\varphi: V \to \N$ is called a \textit{polychromatic $\N$-colouring} of $\mathcal{H}$ if for all $e \in E$, $i \in \N$ there is $v \in e$ such that $\varphi(v)=i$. It is clear that if there exists a polychromatic $\N$-colouring $\varphi$ of a hypergraph $\mathcal{H}$, then its chromatic number is equal to $2$. For instance, one can colour each vertex $v \in V$ according to the parity of $\varphi(v)$ to prove this simple implication.

For a norm $N$ on $\R^n$ and for an infinite subset $\M \subset \R^n$, let $\mathcal{H}(\R^n, N, \M)$ be the hypergraph with all points of $\R^n$ being its vertices and all $N$-isometric copies of $\M$ being its edges.  With this notation, it is easy to see that \Cref{th_lp_plane} states that the chromatic number of $\mathcal{H}(\R^2, \ell_p, \M)$ is equal to $2$ for every $1 < p < \infty$. As we have discussed in the previous paragraph, this would follow from the next stronger statement.

\begin{Theorem} \label{th_lp_plane_strong}
	Let $1 < p < \infty$ and $\M \subset \R^2$ be infinite. Then there exists a polychromatic $\N$-colouring of the hypergraph $\mathcal{H}(\R^2, \ell_p, \M)$. That is, the points of $\R^2$ can be coloured with countably many colours so that each  $N$-isometric copy 
 of $\M$
 contains a point of each colour class.
\end{Theorem}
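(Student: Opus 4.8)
The plan is to build the required $\N$-colouring in three moves: reduce to a well-chosen countable subconfiguration, use strict convexity to constrain the shape of all isometric copies, and then colour by a non-archimedean ``scale valuation'' so that every copy runs through all colours. Throughout I use the monotonicity of polychromatic colourings under passing to subsets: if $\M_0\subseteq\M$ is infinite and $\mathcal H(\R^2,\ell_p,\M_0)$ admits a polychromatic $\N$-colouring $\varphi$, then $\varphi$ also works for $\M$, since every copy $f(\M)$ contains the copy $f(\M_0)$ and hence already meets every colour class. By Bolzano--Weierstrass, after passing to a subsequence I may assume $\M_0=\{a_0,a_1,\dots\}$ with either $a_j\to a_0$ (accumulation case) or $\|a_j\|_p\to\infty$ (unbounded case); the two are interchanged by the scale reversal built into the colouring below, so I concentrate on the accumulation case and, passing to a further subsequence, arrange that the difference vectors $d_j:=a_j-a_0$ are $\Z$-linearly independent with $\|d_{j+1}\|_p\ll\|d_j\|_p$.

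The structural heart is that, for $1<p<\infty$ with $p\neq2$, a copy cannot rotate freely. Here the linear $\ell_p$-isometries form a \emph{finite} group $G$ (the at most $8$ symmetries of the unit ball). If $\M_0$ is not contained in a line, the betweenness characterisation of strict convexity forces any isometric copy $f(\M_0)$ to reproduce all betweenness and collinearity relations of $\M_0$; placing $f(a_0),f(a_1),f(a_2),\dots$ in turn, each new point is cut out by two $\ell_p$-circles, and from the fourth point on the placement is over-determined. So for a suitably generic non-collinear $\M_0$ every copy is a genuine global isometry image, $f(\M_0)=t+g(\M_0)$ with $t\in\R^2$, $g\in G$. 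In particular the family of copies is translation-invariant and uses only finitely many directions.

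Now set $\Lambda:=\sum_{g\in G}\langle g(d_j):j\ge1\rangle_{\Z}$, a countable free subgroup of $(\R^2,+)$; each copy $t+g(\M_0)$ lies in a single coset of $\Lambda$. Define a valuation $v\colon\Lambda\setminus\{0\}\to\Z$ by $v(\lambda)=-\max\{j:\text{the }g(d_j)\text{-coordinate of }\lambda\text{ is nonzero}\}$, the index of the \emph{finest} basis vector occurring in $\lambda$; this is a non-archimedean valuation ($v(\lambda+\mu)\ge\min(v(\lambda),v(\mu))$, with equality when the values differ) and $v(g(d_j))=-j$. Choosing a representative $\pi(x)$ in each $\Lambda$-coset and a map $\beta\colon\Z\to\N$ that is surjective on every tail $(-\infty,-M]$, put $\varphi(x)=\beta\big(v(x-\pi(x))\big)$. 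For a copy with centre $c=t+g(a_0)$ the $j$-th point has offset $(c-\pi(c))+g(d_j)$ from the coset representative; since $v(g(d_j))=-j\to-\infty$ while $v(c-\pi(c))$ is a fixed integer, the ultrametric inequality gives $v\big((c-\pi(c))+g(d_j)\big)=-j$ for all large $j$. Hence the colours $\beta(-j)$ all occur along the copy and exhaust $\N$, so the copy is polychromatic.

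The main obstacle is precisely the situation where this rigidity fails, namely when $\M$ is contained in a line (and, analogously, the Euclidean case $p=2$, where rotations are isometries). Then strict convexity forces copies to be collinear, but a collinear configuration embeds isometrically on \emph{every} line, so copies occur in a continuum of directions and their difference vectors no longer lie in any fixed countable $\Lambda$; the coset-valuation engine collapses. I expect to spend most effort here. The plan is to reduce any non-collinear $\M$ to the rigid case by extracting a curved, generic subsequence, and to treat a genuinely collinear $\M$ separately: one wants a single colouring of $\R^2$ whose restriction to each line, in $\ell_p$-arclength, is a one-dimensional scale-valuation colouring, with the per-line coset representatives chosen compatibly. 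Carrying out these choices simultaneously across the continuum of lines --- equivalently, building a scale-reading function that need not know the arbitrary centre of a copy --- is the crux; I would attack it either by a transfinite construction over the copies that exploits the translation- and dilation-invariance of the copy family to keep locating fresh points, or by a Hamel-basis valuation on $(\R,+)$ engineered to record the metric scale $\|d_j\|_p\to0$.
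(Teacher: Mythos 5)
There are two genuine gaps, and they sit exactly where the theorem's difficulty lives. The first is your rigidity claim that, for a ``suitably generic'' non-collinear $\M_0$, every $\ell_p$-isometric copy has the form $t+g(\M_0)$ with $g$ in the finite linear isometry group. An $N$-isometric copy is only required to preserve pairwise distances; it need not extend to a global isometry, and the ``over-determined from the fourth point on'' dimension count does not rule out non-rigid solutions. Concretely, if all of $\M_0$ except one point $a_j$ happens to lie on the $\ell_p$-bisector $\{\x:\|\x-a_j\|_p=\|\x-b\|_p\}$ of $a_j$ and some $b\ne a_j$, then swapping $a_j$ for $b$ gives a copy that is not $t+g(\M_0)$; and nothing you say excludes copies differing from every rigid copy in infinitely many points. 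The paper proves only the far weaker statement that two copies sharing $k_0=36\cdot 2^{36}$ points differ in at most one point, and even that requires a nontrivial theorem of Garibaldi bounding the number of common points of two distinct non-linear $\ell_p$-bisectors, fed into an Erd\H{o}s--Hajnal-type colouring lemma for hypergraphs whose edges (after a cleaning step) have bounded pairwise intersections. Your coset-valuation engine is a nice alternative \emph{if} rigidity held, but rigidity is the whole problem, not a genericity remark. (Two further technical problems: the set $\{g(d_j):g\in G,\ j\ge 1\}$ is never $\Z$-linearly independent since $-\mathrm{id}\in G$, so ``the $g(d_j)$-coordinate of $\lambda$'' and hence your valuation $v$ are not well defined as stated; and $p=2$, where $G$ is infinite, is inside the theorem's scope but outside your rigid-case analysis.)

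The second gap is the collinear case, which you explicitly leave unresolved and call the crux. In fact this is the easy case: by strict convexity every $\ell_p$-isometric copy of a collinear set is again collinear, and after passing (via a distinct-distances extraction, which the paper does with a short compactness argument) to a subset all of whose pairwise distances differ, any copy is reconstructible from three of its points --- two points fix the line and the indices, and each further point is pinned down by its distances to them. Hence distinct copies meet in at most two points and the Erd\H{o}s--Hajnal theorem yields the polychromatic $\N$-colouring directly, with no need to parametrise the continuum of directions. Your difficulty there is an artifact of insisting that all difference vectors of all copies lie in one fixed countable group; the intersection-based approach never needs the family of copies to be countable or translation-generated. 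So the proposal inverts the actual structure of the proof: the case you treat as easy is the one requiring the heavy tool, and the case you flag as hard is the one that is essentially immediate.
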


It seems natural to conjecture that a polychromatic $\N$-colouring of $\mathcal{H}(\R^n, N, \M)$ should exist whenever the norm is strictly convex, but this question remains open. However, in case $\M$ is unbounded, it is not hard to construct such a colouring explicitly for any, not necessarily strictly convex norm.

\begin{Proposition} \label{unboundedM}
	Let $N$ be a norm on $\R^n$ and an infinite $\M \subset \R^n$ be unbounded. Then there exists a polychromatic $\N$-colouring of the hypergraph $\mathcal{H}(\R^n, N, \M)$. 
\end{Proposition}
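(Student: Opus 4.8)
The plan is to build the colouring \emph{explicitly} as a function of the $N$-norm of a point, exploiting two facts: every $N$-isometric copy of an unbounded set is itself unbounded, and the triangle inequality controls how far a point of a copy can be from the origin once we know its distance to one distinguished point of that copy.

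First I would fix a base point $m_0 \in \M$. Since $\M$ is unbounded, the set $\{\|m - m_0\|_N : m \in \M\}$ is unbounded, so I can select points $m_1, m_2, \dots \in \M$ whose distances $d_k := \|m_k - m_0\|_N$ grow as fast as I like; concretely I would pass to a subsequence with $d_k \to \infty$ and $d_k > 3 d_{k-1}$. This lacunarity guarantees that the annular scales $J_k := [\,d_k/2,\ 3 d_k/2\,] \subset [0,\infty)$ are pairwise disjoint, and crucially that their lengths $|J_k| = d_k$ tend to infinity. Next I fix any map $k \mapsto i_k$ from $\N$ onto $\N$ for which every colour $i \in \N$ is attained infinitely often (for instance the pattern $1,1,2,1,2,3,1,2,3,4,\dots$), and I define the colouring $\varphi \colon \R^n \to \N$ by $\varphi(\x) := i_k$ whenever $\|\x\|_N \in J_k$, and $\varphi(\x) := 1$ otherwise.

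To check that $\varphi$ is a polychromatic $\N$-colouring, I would take an arbitrary copy $\M' = f(\M)$, set $z_0 := f(m_0)$ and $z_k := f(m_k)$, and recall that $\|z_k - z_0\|_N = d_k$ for every $k$. Fix a colour $i$. Using that infinitely many $k$ satisfy $i_k = i$ while $d_k \to \infty$, I would choose such a $k$ with the additional property $d_k \ge 4\|z_0\|_N$. The triangle inequality then forces $\|z_k\|_N \in [\,d_k - \|z_0\|_N,\ d_k + \|z_0\|_N\,] \subseteq [\,3 d_k/4,\ 5 d_k/4\,] \subseteq J_k$, whence $\varphi(z_k) = i_k = i$ with $z_k \in \M'$. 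As $i$ was arbitrary, every colour appears on $\M'$, which is exactly the polychromatic property. Note that only the triangle inequality and unboundedness enter, so the argument is valid for an \emph{arbitrary} norm, with no appeal to strict convexity or to any rigidity of copies.

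The step I expect to be the genuine obstacle is precisely calibrating the scales, because the adversarial placement of a copy is powerful: a translation can push the distinguished point $z_0 = f(m_0)$ arbitrarily far from the origin, so a naive partition of $[0,\infty)$ into fixed annuli would be defeated (the whole copy could sit far out and miss the small-norm colours). The fix is to let the band widths $|J_k|$ grow (here proportionally to $d_k$): for any single copy the offset $\|z_0\|_N$ is a fixed constant, so all sufficiently large scales $d_k$ have bands wide enough to absorb it, and this is what makes the point $z_k$ land in the prescribed colour band regardless of the direction in which $f$ places it on the sphere of radius $d_k$ about $z_0$.
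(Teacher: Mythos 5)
Your proof is correct and follows essentially the same strategy as the paper: both select a lacunary sequence of points of $\M$ at rapidly growing $N$-distances from a base point, colour annuli about the origin by a function taking every value infinitely often, and use the triangle inequality to show the corresponding points of any isometric copy must land in the prescribed annuli once the radius exceeds a constant depending on that copy. The only cosmetic difference is that the paper partitions $\R^n$ into nested rings that the copy eventually meets consecutively, whereas you use disjoint (non-covering) bands with a default colour outside them; both verifications are sound.
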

\begin{proof}
	We will find the desired colouring along a recursively constructed sequence of nested $N$-balls centred at the origin $\mathbf{0}\in \R^n$ with sufficiently quickly growing radii and alternating colours.
	
	Choose an element $\x^0\in \M$ and let $r_1=1$. For all $i \in \N$, on the $i$-th step, we pick an arbitrary $\x^i \in \M$ such that $\|\x^i-\x^0\|_N>2r_i$. This is possible since $\M$ is unbounded. Set $r_{i+1} = r_i+\|\x^i-\x^0\|_N$. Consider the sequence of $N$-balls $B_i = B_N(\mathbf{0},r_i)$, $i \in \N$. First, observe that $r_{i+1} > 3r_i$ for all $i \in \N$ by construction. Therefore, $\R^n$ is split into the disjoint union of `rings' $B_{i}\sm B_{i-1}$, $i \in \N$. (Here it is convenient to define $B_0=\varnothing$ for uniformity.) Second, we claim that every $N$-isometric copy $\M'$ of $\M$ intersects all but finitely many of these rings. Indeed, let $\y^0, \, \y^1, \, \dots$ be the elements of $\M'$ that correspond to $\x^0, \, \x^1, \, \dots$ from $\M$, respectively. Let $i\in \N$ be such that $\y^0 \in B_{i}$. It is sufficient to check that for all $j \ge 0$, we have $\y^{i+j} \in B_{i+j+1}\sm B_{i+j}$. To see this, we note that
	\begin{equation*}
		\|\y^{i+j}\|_N \ge \|\y^{i+j}-\y^0\|_N-\|\y^{0}\|_N = \|\x^{i+j}-\x^0\|_N-\|\y^{0}\|_N \ge 2r_{i+j}-r_i > r_{i+j}.  
	\end{equation*}
	Thus, we have $\y^{i+j} \notin B_{i+j}$. Similarly,
	\begin{equation*}
		\|\y^{i+j}\|_N \le \|\y^{i+j}-\y^0\|_N+\|\y^{0}\|_N \le \|\x^{i+j}-\x^0\|_N+r_i \le r_{i+j+1}.  
	\end{equation*}
	Hence, we have $\y^{i+j} \in B_{i+j+1}$.
	
	Now it is easy to construct the desired polychromatic $\N$-colouring by assigning the colour $\psi(i)$ to all points of the $i$-th ring $B_{i}\sm B_{i-1}$, where $\psi: \N \to \N$ is an arbitrary function that takes every natural number as its value infinitely often. (For instance, we can set $\psi\left(\frac{k(k-1)}{2}+i\right) = i$ for all $k \in \N$, $i \in [k]$. On the first ten natural numbers, this function takes the values $1, \ 1,2, \ 1,2,3, \ 1,2,3,4$, respectively.)
\end{proof}

This implies Theorem \ref{th_lp_plane_strong} when $\M$ is unbounded. 
The case of a bounded infinite $\M$ is substantially harder. The existence of a polychromatic $\N$-colouring of the hypergraph $\mathcal{H}(\R^n, \ell_2, \M)$ is guaranteed by~\cite[Theorem~19]{EGMRSS2} for a Euclidean space of arbitrary dimension. In the centre of that argument was the following theorem due to Erd\H{o}s and Hajnal.

\begin{Theorem}[Erd\H{o}s--Hajnal,
\cite{ErdHaj}] \label{ErdosHajnal}
	Let $k$ be an integer and $\mathcal{H}=(V,E)$ be a hypergraph such that every $e \in E$ is infinite and that $|e_1\cap e_2|<k$ for all distinct $e_1,e_2 \in E$. Then there exists a polychromatic $\N$-colouring of $\mathcal{H}$.
\end{Theorem}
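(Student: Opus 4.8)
The plan is to prove the stronger statement that $\mathcal H=(V,E)$ admits a \emph{disjoint refinement}: pairwise disjoint infinite sets $P(e)\subseteq e$, one for each edge $e\in E$. Granting this, a polychromatic $\N$-colouring is immediate, since I may colour each $P(e)$ by a surjection onto $\N$ (possible because $P(e)$ is infinite), colour all remaining vertices arbitrarily, and then observe that every edge $e\supseteq P(e)$ meets all colour classes. Thus it suffices to select the private sets $P(e)$.

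First I would build these sets by transfinite recursion along a well-ordering $E=\{e_\alpha:\alpha<\kappa\}$. At step $\alpha$ I want to choose an infinite $P(e_\alpha)\subseteq e_\alpha\setminus\bigcup_{\beta<\alpha}P(e_\beta)$, which is possible exactly when $e_\alpha$ is not almost covered by the earlier sets. Here the hypothesis enters: since $P(e_\beta)\subseteq e_\beta$ and $|e_\alpha\cap e_\beta|<k$, each earlier step deletes fewer than $k$ vertices from $e_\alpha$, and these deleted pieces are disjoint. In the countable case this already closes the argument: ordering $E$ in type $\omega$, every edge $e_\alpha$ has only finitely many predecessors, so only finitely many vertices of $e_\alpha$ get deleted and infinitely many remain, from which $P(e_\alpha)$ is chosen. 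Moreover, the countable case covers all hypergraphs on a countable ground set, because fixing an enumeration of $V$ and mapping each edge to the set of its $k$ smallest vertices gives an injection of $E$ into the family $\binom{V}{k}$ of $k$-element subsets of $V$ (two edges with the same $k$ smallest vertices would share $k$ of them), so $|V|=\aleph_0$ forces $|E|\le\aleph_0$.

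The main obstacle is the uncountable case, where $V$ and $E$ may both be uncountable while the individual edges stay countably infinite. Then an edge $e_\alpha$ can meet infinitely many predecessors, each contributing up to $k-1$ deleted vertices, so the naive greedy choice may exhaust all of $e_\alpha$ and leave no room for $P(e_\alpha)$. Overcoming this is the heart of the proof: rather than an arbitrary well-ordering, one must arrange the selection globally so that every edge retains infinitely many \emph{fresh} vertices — equivalently, one solves the infinite system-of-disjoint-representatives problem of assigning to each edge an infinite set of vertices that it alone claims. The uniform bound $k$ on pairwise intersections is precisely what makes the required defect/expansion condition hold, since each edge is infinite while its overlap with every other edge is bounded; and I expect the careful transfinite bookkeeping that certifies this condition, rather than the reduction to disjoint refinement, to be the genuinely delicate step.
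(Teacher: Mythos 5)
Your reductions are correct as far as they go: a disjoint refinement (pairwise disjoint infinite $P(e)\subseteq e$) does yield a polychromatic $\N$-colouring, the greedy recursion does work when $E$ is countable, and the injection $e\mapsto\{k\text{ smallest vertices of }e\}$ does show that a countable vertex set forces a countable edge set. But the proof stops exactly where the theorem starts to have content. In the paper's application the vertex set is $\R^2$ and the edges are the $\ell_p$-isometric copies of a countable set $\M_1$, so there are continuum many countably infinite edges with pairwise intersections of size less than $k$: this is precisely the uncountable case that you describe as ``the heart of the proof'' and then do not carry out. As you yourself observe, at a stage $\alpha\ge\omega$ of the transfinite recursion the edge $e_\alpha$ may be entirely swallowed by the countably many earlier private sets, and no selection rule is given that prevents this. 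Saying that ``the uniform bound $k$ is precisely what makes the required defect/expansion condition hold'' is a statement of hope, not an argument; the step you are missing is essentially the nontrivial theorem that a family of infinite sets with uniformly bounded pairwise intersections is essentially disjoint, and proving it requires a genuine transfinite construction (filtrations of $E$ by cardinality, or elementary submodels), not just ``careful bookkeeping'' around the naive greedy order.

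It is also worth noting that you are aiming at a strictly stronger statement than the paper needs. The paper does not build edge-indexed private sets at all: it invokes Theorem~8.b of Erd\H{o}s and Hajnal to produce a single set $V_1\subseteq V$ meeting every edge in a finite non-empty set, observes that the hypergraph induced on $V\setminus V_1$ still satisfies the hypotheses, and iterates to obtain countably many pairwise disjoint sets $V_1, V_2,\dots$ each meeting every edge; colouring $V_i$ with colour $i$ finishes the proof. That route needs only countably many disjoint pieces in total, rather than $|E|$ many, which is why it sidesteps the difficulty your approach runs into. To repair your proposal you would either have to actually prove the disjoint refinement lemma in the uncountable case, or retreat to a weaker intermediate statement of the transversal type that the paper uses.
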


Note that this theorem was not stated explicitly in \cite{ErdHaj}. However, it follows from \cite[Theorem~8.b]{ErdHaj}. Indeed, the latter result implies that under the conditions of \Cref{ErdosHajnal}, there exists a subset $V_1 \subset V$ such that the intersection $e \cap V_1$ is finite and non-empty for every edge $e \in E$. Observe that the `projection' hypergraph $\mathcal{H}'=(V',E')$, where $V' = V\sm V_1$ and $E' = \{e \cap V': e \in E\}$, still satisfies the conditions of \Cref{ErdosHajnal}. Thus, there exists another subset $V_2 \subset V'$ such that the intersection $e \cap V_2$ is finite and non-empty for every edge $e \in E'$. Proceeding in the same vein, we get an infinite sequence $V_1, \, V_2, \, \dots$ of disjoint subsets of $V$ such that each $V_i$ intersects every $e \in E$. Now mapping all the vertices of $V_i$ to $i$ for all $i \in \N$, we obtain the desired polychromatic $\N$-colouring of $\mathcal{H}$. In this construction, the vertices from $V\sm (V_1\cup V_2 \cup \dots)$ are `redundant', and can be mapped arbitrarily.

To extend the argument from~\cite{EGMRSS2} to other $\ell_p$-norms in the plane, we need the following strengthening of the previous theorem.

\begin{Lemma} \label{infcommon}
	Let $k$ be an integer and $\mathcal{H}=(V,E)$ be a hypergraph. For an edge $e\in E$, let $e'$ be its intersection 
    with all the edges that share at least $k$ vertices with $e$, i.e. let
	\begin{equation*}
e'\coloneqq \hspace{-2mm} \bigcap_{\substack{f \in E \\ |e\cap f|\ge k}}\hspace{-3mm}f \subset V.
	\end{equation*}
	If $e'$ is infinite for every edge $e$, then there exists a polychromatic $\N$-colouring of $\mathcal{H}$.
\end{Lemma}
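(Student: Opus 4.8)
The plan is to reduce the statement to the Erd\H{o}s--Hajnal theorem (\Cref{ErdosHajnal}) by replacing each edge with an infinite ``core'' on which the bounded-intersection hypothesis of that theorem can be verified. Throughout, I would write $e \sim f$ for two edges $e, f \in E$ whenever $|e \cap f| \ge k$. The first thing to record is that the hypothesis forces strong structure on this relation. Since $e' \subseteq e$ and $e'$ is infinite, every edge is infinite, so $\sim$ is reflexive; and it is symmetric by definition. The decisive observation is that whenever $f \sim e$, the edge $f$ is one of the terms in the intersection defining $e'$, and hence $e' \subseteq f$.

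The heart of the argument, and the step I expect to be the main obstacle, is the claim that $\sim$ is in fact an \emph{equivalence relation}. To see transitivity, suppose $e \sim f$ and $e \sim g$. By the observation above, $e' \subseteq f$ and $e' \subseteq g$, so the infinite set $e'$ is contained in $f \cap g$; hence $|f \cap g|$ is infinite and in particular at least $k$, i.e.\ $f \sim g$. Together with symmetry this yields full transitivity, so $\sim$ partitions $E$ into classes. Two consequences follow immediately. First, if $C$ is such a class, then for every $e \in C$ the set of edges sharing at least $k$ vertices with $e$ is exactly $C$, so $e' = \bigcap_{f \in C} f$ does not depend on the choice of $e \in C$; call this common infinite set the core $c_C$ of the class. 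Second, if $C \ne D$ are distinct classes and $e \in C$, $f \in D$, then $e \not\sim f$, so $c_C \cap c_D \subseteq e \cap f$ has fewer than $k$ elements; in particular distinct classes have distinct cores.

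It then remains to assemble these pieces. Consider the hypergraph $\mathcal{H}^\ast = (V, \{c_C : C \text{ a class of } \sim\})$. Its edges are infinite, and by the previous paragraph any two distinct edges meet in fewer than $k$ vertices, so \Cref{ErdosHajnal} provides a polychromatic $\N$-colouring $\varphi : V \to \N$ of $\mathcal{H}^\ast$. I claim $\varphi$ already witnesses the lemma for $\mathcal{H}$: given any edge $e \in E$ lying in a class $C$, we have $e \supseteq e' = c_C$, and $c_C$ contains a vertex of every colour since it is an edge of $\mathcal{H}^\ast$; hence so does $e$. Thus $\varphi$ is a polychromatic $\N$-colouring of $\mathcal{H}$. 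The only genuinely nontrivial point is the equivalence-relation claim, where the hypothesis that each core is infinite (rather than merely nonempty) is exactly what is needed to promote a common neighbour into an edge of the relation; everything else is bookkeeping together with a single black-box application of \Cref{ErdosHajnal}, so this approach avoids the iterative ``projection'' construction entirely.
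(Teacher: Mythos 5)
Your proof is correct and takes essentially the same route as the paper's: both replace each edge $e$ by its core $e'$ and apply the Erd\H{o}s--Hajnal theorem (\Cref{ErdosHajnal}) to the hypergraph of cores, using that distinct cores meet in fewer than $k$ vertices and that $e'\subseteq e$. The only cosmetic difference is that the paper proves the pairwise-intersection bound directly (if $|e_1'\cap e_2'|\ge k$ then $e_1'$ and $e_2'$ are intersections of the same family of edges and hence coincide), whereas you package the same fact via the equivalence relation $\sim$.
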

\begin{proof}
 First, we observe that if for two edges $e_1$ and $e_2$ the sets $e_1'$ and $e_2'$ are distinct, then they share less than $k$ vertices. Indeed, it is clear that every $f \in E$ that shares at least $k$ vertices with $e_1$ or $e_2$, including the edges $e_1$ and $e_2$ themselves, also contains the intersection $e_1'\cap e_2'$ by construction. If the cardinality of the latter intersection is at least $k$, then we conclude that $|e_1\cap f|\ge k$ if and only if $|e_2\cap f|\ge k$ for all $f \in E$. Hence, both $e_1'$ and $e_2'$ are defined as the intersection of the same family of edges, and so they coincide even if $e_1,e_2 \in E$ are distinct.
	
	Consider the hypergraph $\mathcal{H}'=(V,E')$, where $E'=\{e':e \in E\}$. By our assumption, it satisfies the conditions of \Cref{ErdosHajnal}, and thus there exists a polychromatic $\N$-colouring $\varphi:V \to \N$ of $\mathcal{H}'$. Since $e'\subset e$ for all $e\in E$, it is easy to see that $\varphi$ is a polychromatic $\N$-colouring of $\mathcal{H}$ as well.
\end{proof}

The last ingredient in our proof of \Cref{th_lp_plane_strong} is the following structural lemma, ensuring that in every infinite $\M$, there exists an infinite subset $\M' \subset \M$ such that all the distances between its points are pairwise distinct. Even though later we will use this lemma only in case of the $\ell_p$-norm in the plane, we state the lemma in its general form, because the proof is universal.

\begin{Lemma} \label{distDist}
	Let $N$ be a norm on $\R^n$ and $\M \subset \R^n$ be an infinite bounded set. Then there exists an infinite $\M' \subset \M$ such that all the distances $\|\x-\x'\|_N$, $\x, \, \x' \in \M'$ are pairwise distinct.
\end{Lemma}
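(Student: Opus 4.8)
The plan is to build the desired $\M'$ as a rapidly converging subsequence extracted greedily. Since $\M$ is bounded and infinite, by Bolzano--Weierstrass it has a limit point $\x^*$, and I may fix a sequence of distinct points $z_1,z_2,\dots\in\M\sm\{\x^*\}$ with $z_n\to\x^*$. From it I will pull out an infinite subsequence $\y_1,\y_2,\dots$, maintaining the invariant that at every stage all pairwise distances among the already chosen $\y$'s are distinct; the set of chosen points is then the required $\M'$.

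The first device is to force a very fast convergence rate. Passing to a subsequence, I may assume the radii $r_i:=\|\y_i-\x^*\|_N$ satisfy $r_{i+1}<r_i/3$. For $i<j$ the triangle inequality gives $\bigl|\,\|\y_i-\y_j\|_N-r_i\,\bigr|\le r_j\le r_{i+1}$, so every distance with smaller index $i$ lies in the interval $J_i:=[r_i-r_{i+1},\,r_i+r_{i+1}]$. A direct computation shows that $r_{i+1}<r_i/3$ makes the $J_i$ pairwise disjoint, so two distances with \emph{different} smaller index are automatically unequal. This reduces the whole problem to a single-centre statement: it suffices to guarantee that for each fixed $i$ the distances $\|\y_i-\y_j\|_N$, $j>i$, are pairwise distinct.

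To secure the single-centre distinctness I must, when adjoining $\y_{i+1}$, keep it off the finitely many spheres $\partial B_N(\y_a,\rho)$ ($a\le i$, with $\rho$ ranging over the distances already realised from the centre $\y_a$). A sphere whose radius differs from $r_a=\|\y_a-\x^*\|_N$ has positive distance to $\x^*$ and is therefore missed by all but finitely many candidates $z_n$; only the spheres through $\x^*$, i.e.\ those of radius $r_a$, are dangerous. I expect the crux to be precisely here: $\M$ may cluster at $\x^*$ along such a sphere $\partial B_N(\y_a,r_a)$, trapping cofinitely many candidates on it, in which case every further choice is at the same distance $r_a$ from $\y_a$ --- a \emph{star collision} that cannot be repaired by further thinning. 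Two observations should defeat this. First, infinite equilateral sets do not exist in a finite-dimensional normed space: the open balls of radius $s/2$ about an equilateral set of diameter $s$ are pairwise disjoint and lie in a fixed ball, so a volume count bounds such a set by $3^n$; this rules out the most degenerate clustering. Second, and more importantly, when the norm is \emph{strictly convex} (the case $\ell_p$, $1<p<\infty$, actually needed) a trapping sphere is essentially rigid --- its centre is determined by the cluster --- so a single omitted point dodges it, and any other chosen centre meets the cluster sphere in only finitely many points. The genuinely delicate case is a \emph{polygonal} norm, where a flat facet of a sphere yields a whole continuum of admissible trapping centres; there I would detect that the cluster lies near $\x^*$ in an affine hyperplane, restrict the construction to the affine hull of that flat piece, and recurse with the induced norm in one lower dimension, the dimension drop guaranteeing termination.
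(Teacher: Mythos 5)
Your first half coincides with the paper's own argument: pick an accumulation point $\x^*$, extract a subsequence with $r_{i+1}\le r_i/3$, and use the disjointness of the intervals $[r_i-r_{i+1},\,r_i+r_{i+1}]$ to reduce the problem to distinctness of the distances emanating from each single fixed centre. The gap is in how you resolve the ``trapping'' obstruction, which you correctly identify as the crux. Suppose the construction gets trapped repeatedly, at centres $\bbb^1,\bbb^2,\dots$: all candidates surviving past the $i$-th trapping lie on the sphere of radius $\rho_i$ about $\bbb^i$, and since infinitely many candidates accumulate at $\x^*$ this forces $\rho_i=\|\bbb^i-\x^*\|_N$. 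The resulting configuration satisfies $\|\bbb^i-\bbb^j\|_N=\rho_i$ for all $j>i$, with the $\rho_i$ pairwise \emph{different} (they decrease geometrically). This is not an equilateral set, and your volume count does not bound it: the balls $B_N(\bbb^i,\rho_i/2)$ are indeed pairwise disjoint, but their radii shrink geometrically, so the sum of their volumes is dominated by a convergent geometric series and no bound on the number of centres follows. Bounding such ``distance determined by the smaller index'' sequences is a genuinely non-trivial theorem (Polyanskii, \cite[Theorem~3]{pol17}, giving $O(3^nn)$), and it is exactly what the paper uses: when a sphere about the current point captures infinitely many candidates, the paper restricts to that sphere, marks its centre for later deletion, observes that the deleted centres form such a configuration, and concludes that only finitely many points are ever deleted.

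Your two substitute devices do not close this hole. The rigidity claim for strictly convex norms --- that a second trapping sphere cannot capture infinitely many points of the first because two distinct spheres meet in finitely many points --- is unproven, and it is not a consequence of strict convexity alone for spheres of \emph{different} radii, which is the case here since $\rho_1\neq\rho_2$; for $\ell_p$ one could hope to argue via piecewise analyticity of the unit circle, but that requires a Garibaldi-type argument, not a one-line remark. The polygonal case is only a sketch (the candidates trapped on $\partial B_N(\bbb^1,\rho_1)$ near $\x^*$ need not lie in a single facet if $\x^*$ sits at a vertex of that sphere), and norms that are neither strictly convex nor polygonal --- for which the lemma is also claimed and proved in the paper --- are not covered at all. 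To repair the proof you need either Polyanskii's bound or some other mechanism that terminates the chain of trappings for an arbitrary norm.
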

\begin{proof} Observe that the set of accumulation points of $\M$ is non-empty since $\M$ is infinite and bounded. Let $\y \in \R^n$ be an arbitrary accumulation point of $\M$. Consider a sequence $X=\{\x^1, \, \x^2, \, \dots\}$ of points of $\M$ that converges to $\y$ with at least exponential speed. More specifically, we begin with some $\x^1\in \M\sm\y$, and construct this sequence recursively by picking on the $i$-th step an arbitrary $\x^{i+1}\in \M\sm\y$ such that $\|\x^{i+1}-\y\|_N \le \frac{1}{3}\|\x^{i}-\y\|_N$. Observe that for all $i, \, j, \, k_1, \, k_2 \in \N$ such that $i<j$, $i<k_1$, $j<k_2$, we have
	\begin{align*}
		\|\x^i-\x^{k_1}\|_N &\ge \|\x^i-\y\|_N - \|\x^{k_1}-\y\|_N \ge \|\x^i-\y\|_N - \|\x^{i+1}-\y\|_N \ge \frac{2}{3}\|\x^i-\y\|_N, \\
		\|\x^j-\x^{k_2}\|_N &\le \|\x^j-\y\|_N + \|\x^{k_2}-\y\|_N \le \|\x^{i+1}-\y\|_N + \|\x^{i+2}-\y\|_N \le \frac{4}{9}\|\x^i-\y\|_N.
	\end{align*}
	Therefore, if $\|\x^i-\x^{k_1}\|_N = \|\x^j-\x^{k_2}\|_N$ for some $i<k_1$, $j<k_2$, then we must have $i=j$. In other words, if two point pairs of $X$ are at the same distance apart, then the 
 smaller indices in these pairs coincide. It is clear that subsequences of $X$ inherit this property as well.

For any $\x_i, \x_j\in X$, we say that $\x_i$ is {\em smaller} (resp. {\em larger}) than $\x_j$
if $i$ is smaller (resp. larger) than $j$.
We construct now a subsequence $Z=\{\z^1, \, \z^2, \, \dots\}\subseteq X$ recursively. 
At step $i$, we construct $\z^i$ and we might delete some of the points of $X$. Moreover, each $\z^i$ will be be coloured red or blue. Let $\z^1=\x^1$. Suppose that $i\ge 1$ and we have already constructed $\z^1, \, \ldots, \, \z^i$. 
If there is an $N$-sphere centred at $\z^i$ that contains infinitely many points of $X$, then delete all other points of $X$ and let $\z^i$ be blue. 
Otherwise, every $N$-sphere centred at $\z^i$ contains finitely many points of $X$. In this case, keep only one point of $X$ on each of these spheres, delete the others, and let $\z^i$ be red. Let $\z^{i+1}$ be the smallest point of $X$ which is larger than $\z^i$. Let $Z=\{\z^1, \, \z^2, \,\dots\}$ be the result of this recursive procedure.
 
If $\bbb^1, \, \ldots, \, \bbb^m$ is a subsequence of blue points of $Z$, then for all $1\le i<j<k\le m$, we have $\|\bbb^i-\bbb^j\|=\|\bbb^i-\bbb^k\|$. A result of Polyanskii \cite[Theorem~3]{pol17} implies that in this case $m=O(3^nn)$.

Delete all the blue points of $Z$ and denote the resulting infinite `red' subsequence
by $\M'=\{\rr^1, \, \rr^2, \, \dots\}$. Recall that if $\|\rr^i-\rr^{k_1}\|_N = \|\rr^j-\rr^{k_2}\|_N$ for some $i<k_1$, $j<k_2$, then we must have $i=j$, but since $\rr^i$ is red, we must have $k_1=k_2$ as well. That is, all the distances in $\M$ are pairwise distinct, as desired.
\end{proof}

\noindent {\bf Remark.} \Cref{distDist} holds also when $\M$ is unbounded, the argument is very similar. Moreover, the result, for any $\M$, easily 
follows from a more general theorem of Erd\H os and Rado~\cite{ER50} and 
the aforementioned result of Polyanskii~\cite{pol17}, see also \cite{nps17, ns17_2}. Here we included the argument for the bounded case for completeness.

\subsection{Proof of Theorem~\ref{th_lp_plane_strong}}

When $\M$ is unbounded, the result follows directly from \Cref{unboundedM}. Assume for the rest of the proof that $\M$ is bounded. By \Cref{distDist}, we can also assume that all pairwise distances are different in $\M$. Recall that the $\ell_p$-norm is strictly convex, and thus for all three (non-)collinear points in $\M$, their images are also (non-)collinear in every isometric copy of $\M$.

First, suppose that there exists an infinite collinear subset $\M_1 = \{\x^1,\x^2,\dots\} \subset \M$. 
It is clear that every polychromatic $\N$-colouring of $\mathcal{H}=\mathcal{H}(\R^2, \ell_p, \M_1)$ is also a polychromatic $\N$-colouring of $\mathcal{H}(\R^2, \ell_p, \M)$, so let us consider only the former hypergraph. 

For any hyperedge $e$ of $\mathcal{H}$, there exists only one $\ell_p$-isometric bijection between $e$ and $\M_1$, i.e. for all $i\in \N$, we can uniquely reconstruct the point $e(i) \in e$ playing the role of $\x^i$, because the distances between the points of $\M_1$ are pairwise distinct. 
Moreover, it is clear that a point on a line is uniquely determined by its distances to two other fixed points on this line. Summarising the above, we conclude that one can reconstruct $e$ knowing the positions of only three 
of its points. In other words, the hypergraph $\mathcal{H}(\R^2, \ell_p, \M_1')$ satisfies the conditions of \Cref{ErdosHajnal} with $k=3$, which concludes the proof in this case.

Now consider the remaining case, when $\M$ does not contain infinitely many collinear points. We claim that in this case, there exists an infinite subset $\M_2 = \{\z^1,\z^2,\dots\} \subset \M$ such that no three of its points are collinear. Indeed, one can construct this set recursively, since on every step, previously taken points generate only finitely many lines, and each of these lines excludes only finitely many points of $\M$ from consideration.

As earlier, it is sufficient to prove the existence of a polychromatic $\N$-colouring only for 
$\mathcal{H}=\mathcal{H}(\R^2, \ell_p, \M_2)$. Unfortunately, in this case, we might not be able to uniquely determine a point in the plane knowing only its distances to some $k$ fixed points, regardless of the value of $k$, even though $\M_2$ is `in general position'. Indeed, if all these $k$ point lie on the $\ell_p$-\textit{bisector} of $\y^1$ and $\y^2$ defined by

\begin{equation*}
	B_{\ell_p}(\y^1,\y^2) \coloneqq \{\x \in \R^2: \|\x-\y^1\|_p = \|\x-\y^2\|_p\},
\end{equation*}
then we cannot distinguish $\y^1$ from $\y^2$. Hence, the hypergraph $\mathcal{H}$
might fail to satisfy the conditions of \Cref{ErdosHajnal} for every fixed $k \in \N$. However, this situation is rather degenerate, and does not occur very often, as the following result suggests.

\begin{Theorem}[{Garibaldi, \cite[Section~5.4]{Garibaldi}}] \label{garibaldi}
	Let $1 < p < \infty$. Given two distinct pairs $(\y^1, \y^2)$ and $(\y^3, \y^4)$ of different points in the plane, if both bisectors $B_{\ell_p}(\y^1,\y^2)$ and $B_{\ell_p}(\y^3,\y^4)$ are non-linear, then they have  
 less than $k_0 \coloneqq 36\cdot2^{36}$ common points\footnote{It was conjectured in~\cite{Garibaldi}, that the actual maximum number of common points of distinct non-linear $\ell_p$-bisectors should be much smaller, `perhaps as low as $5$ for all $p$'.}.
\end{Theorem}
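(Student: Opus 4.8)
The plan is to realise each bisector as the zero set of a piecewise real-analytic function and then to count intersections cell by cell, exploiting that the functions involved are built from only a bounded number of real-power terms. First I would replace the equation $\|\x-\y^1\|_p = \|\x-\y^2\|_p$ by the equivalent $F(\x) \coloneqq \|\x-\y^1\|_p^p - \|\x-\y^2\|_p^p = 0$, so that, writing $\y^1=(a_1,a_2)$ and $\y^2=(b_1,b_2)$,
\begin{equation*}
	F(x_1,x_2) = |x_1-a_1|^p + |x_2-a_2|^p - |x_1-b_1|^p - |x_2-b_2|^p,
\end{equation*}
and similarly for a function $G$ attached to the pair $(\y^3,\y^4)$. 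The four points determine at most four distinct first coordinates and four distinct second coordinates; the associated vertical and horizontal lines partition the plane into a bounded number of open cells, and on the closure of each cell both $F$ and $G$ are genuinely real-analytic, because every term $|x_i-c|^p$ becomes a real power of a linear form of constant sign.

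The qualitative core is a dichotomy on each cell. The common zero set $\{F=G=0\}$, intersected with a cell, is a bounded semianalytic set; if it were infinite it would contain a real-analytic arc, along which the two bisectors coincide. By the identity theorem for real-analytic functions, coincidence on an arc forces the two maximal bisector curves to be the same curve. To exclude this I would establish a rigidity lemma: a non-linear $\ell_p$-bisector determines its unordered pair of foci, so that two distinct pairs with non-linear bisectors cannot yield the same curve. This lemma, where one must use the genuine metric geometry of the $\ell_p$-norm rather than soft analysis, is the step I expect to be the main obstacle.

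For the explicit constant I would bound, inside a single cell, the number of isolated solutions of the system $F=0$, $G=0$. The decisive feature is that, after the sign normalisation, $F$ and $G$ are finite sums of terms of the form $c\,(\alpha x_1 + \beta x_2 + \gamma)^p$, i.e. fewnomials built from a bounded number of affine forms with the single real exponent $p$. Khovanskii's fewnomial theory bounds the number of non-degenerate solutions of such a system solely in terms of the number of distinct terms, uniformly in the exponent. This uniformity is exactly what makes the final bound independent of $p$, whereas a Bezout-type estimate would degrade with $p$ and be useless here. Summing the per-cell fewnomial bound over the bounded number of cells --- the polynomial factor $36$ coming from the cell count together with the prefactor of the fewnomial estimate, and the factor $2^{36}$ from its exponential main term --- then yields the stated threshold $k_0 = 36\cdot 2^{36}$.
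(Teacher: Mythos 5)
First, a point of order: the paper does not prove this statement at all. It is quoted verbatim from Garibaldi's thesis (\cite[Section~5.4]{Garibaldi}) and used as a black box, so there is no in-paper proof to compare yours against; I can only assess your sketch on its own terms. Its general flavour (reduce to piecewise real-analytic functions, then invoke a Khovanskii-type fewnomial bound that is uniform in the exponent $p$) is plausible and is very likely in the spirit of the actual source, but as written it has genuine gaps.

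The central gap is the dichotomy step. The cells cut out by the horizontal and vertical lines through the foci are unbounded, and so is each bisector, so the set $\{F=G=0\}$ intersected with a cell is \emph{not} a bounded semianalytic set; ``infinite implies contains an arc'' needs an extra argument (e.g.\ finiteness of the number of connected components in an o-minimal structure containing power functions), and infinitely many isolated intersections escaping to infinity must be ruled out separately. Worse, even granting a common analytic arc inside one cell, the identity theorem only forces the two zero sets to share a local analytic branch \emph{within that cell}: since $F$ and $G$ change their analytic formulas across cell walls, you cannot continue analytically past a wall, so you do not get that ``the two maximal bisector curves are the same curve'' globally. Consequently your rigidity lemma --- that a non-linear bisector, as a global point set, determines its focus pair --- is not strong enough even if true; what you actually need is that two non-linear bisectors sharing a one-dimensional analytic piece must have the same foci, which is a strictly harder local statement, and it is precisely the step you concede you cannot carry out. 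Finally, the derivation of $k_0=36\cdot 2^{36}$ is numerology rather than a computation: the cell count is at most $25$, not $36$; Khovanskii's bound for two Pfaffian equations in two variables with the relevant number of terms does not visibly multiply out to this constant; and fewnomial estimates count only non-degenerate solutions, so degenerate isolated intersection points would require an additional perturbation or deformation argument that the sketch does not supply.
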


We show that this result implies that the hypergraph $\mathcal{H}$ satisfies the conditions of \Cref{infcommon} with $k=k_0 = 36\cdot2^{36}$. That is, we prove that for every $\ell_p$-isometric copy $e \subset \R^n$ of $\M_2$, its intersection $e'$ with all the other $\ell_p$-isometric copies of $\M_2$ that share at least $k$ points with $e$ is infinite. In fact, we will show that $|e\sm e'| \leq 1$ which is an even stronger property.
	
First, we observe that $|e\sm f| \leq 1$ for any $\ell_p$-isometric copy $f$ of $\M_2$ such that $|e\cap f|\ge k$. Indeed, since all the pairwise distances between the points of $\M_2$ are distinct by our assumption, it is easy to see that the common points of $e$ and $f$ correspond to the same indices in $\M_2$, i.e. $e(i_1)=f(i_1),\dots,e(i_k)=f(i_k)$ for some $k$ integers $i_1,\dots,i_k$. If $e(j)\neq f(j)$ for some $j \in \N$, then these $k$ points clearly lie on the bisector $B_{\ell_p}\big(e(j),f(j)\big)$ because
\begin{equation*}
	\|e(i)-e(j)\|_p = \|\z^i-\z^j\|_p = \|f(i)-f(j)\|_p
\end{equation*}
for all $i\in \N$. Note that these bisectors are distinct for different values of $j$. Besides, they are also non-linear, since $k \ge 3$, while no three points of $\M_2$ are collinear. Now \Cref{garibaldi} yields that there are less than two values of $j \in \N$ such that $e(j)\neq f(j)$. Hence, we indeed have $|e\sm f| \leq 1$, as claimed.
	
Now we prove a stronger inequality $|e\sm e'| \leq 1$. Assume the contrary, namely that for some integers $j_1, \, j_{\pot}$, we have $e(j_q) \notin e'$ for $q =1,2$. By the definition of $e'$, there are two not necessarily distinct $\ell_p$-isometric copies $f_1, \, f_{\pot}$ of $\M_2$ such that $|e\cap f_q|\ge k$ and $e(j_q) \notin f_q$ for $q = 1, \, 2$. Recall from the previous paragraph that $|e\sm f_q| \leq 1$ for each $q$. This implies that $\big|e\sm (f_1 \cap f_2)\big|\le 2$. In particular, we can choose $k$ points $e(i_1), \, \dots, \, e(i_k)$ that belong to the intersection $f_1 \cap f_2$. Let us also recall that the common points of $e$ and $f_q$ have the same indices, i.e. $e(i_p) = f_q(i_p)$ for all $p \in [k], q = 1,2$. As earlier, it is not hard to see that each of the $k$ points $e(i_p)$, $p\in [k]$, belongs to two distinct bisectors $B_{\ell_p}\left(e(j_q),f_q(j_q)\right), q = 1, \, 2$. This contradiction with \Cref{garibaldi} finally yields that $|e\sm e'| \leq 1$, and completes the proof of \Cref{th_lp_plane_strong}.

\section{Polygonal norms -- Proof of Theorem~\ref{th_polygon}} \label{sec3}

As we mentioned in the introduction, \Cref{th_polygon} was obtained in~\cite{FKS} for the $\ell_\infty$-norm, i.e. in case the unit disc is a square. In the present section, we generalise essentially the same argument for all polygonal norms. 

\subsection{Notation and preliminaries}

Under the conditions of \Cref{th_polygon}, let the unit disc $P\coloneqq B_N(\mathbf{0}, 1)$ be a centrally symmetric convex $2m$-gon. Given $k \in [m]$, pick one side of $P$ from the $k$-th pair of its opposite sides, and let $\W^k$ be the vector connecting its vertices. Let $\V^k \in \R^2$ be a vector orthogonal to $\W^k$ such that the two lines containing the $k$-th pair of opposite sides of $P$ are defined by the equations $\langle \x, \V^k \rangle =1$ and $\langle \x, \V^k \rangle =-1$, where $\langle\cdot,\cdot\rangle$ stands for the standard Euclidean dot product in the plane. Note that here and in what follows we do not distinguish points from their position vectors. Both vectors $\W^k$ and $\V^k$ are well-defined up to a sign, which we pick arbitrarily. Using this notation, it is not hard to see that
\begin{equation*}
	P = \{\x\in\R^2 : |\langle \x, \V^k \rangle|\le 1 \mbox{ for all } 1\le k \le m\},
\end{equation*}
and, even more generally, we have
\begin{equation*}
	\|\x\|_N = \max_{1\le k \le m} |\langle \x, \V^k \rangle|.
\end{equation*}

For a non-zero vector $\x \in \R^2$, let $k(\x)$ be an index such that $\|\x\|_N = |\langle \x, \V^{k(\x)} \rangle|$. In other words, the line $\{t\x: t \in \R\}$ intersects the polygon $P$ at the $k(\x)$-pairs of its opposite sides. If this line passes through two vertices  of the polygon, then $k(\x)$ is a $2$-valued function in this case.

\begin{Lemma} \label{lem_sum}
	Let $k \in [m]$ and $\x^1, \x^2 \in \R^2$ be vectors such that $\|\x^i\|_N = \langle \x^i, \V^k \rangle$, $i=1, \, 2$. Then we have $\|\x^1+\x^2\|_N = \langle \x^1+\x^2, \V^k \rangle$.
\end{Lemma}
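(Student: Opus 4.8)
The statement is essentially that the functional $\langle \cdot, \V^k \rangle$, which computes the $N$-norm on the cone of vectors where the $k$-th pair of sides is the "active" constraint, is additive on that cone. The plan is to exploit the characterisation $\|\x\|_N = \max_{1 \le j \le m} |\langle \x, \V^j \rangle|$ together with the hypothesis that the maximum is attained (without absolute value, so in fact $\langle \x^i, \V^k\rangle \ge 0$) at the index $k$ for both $\x^1$ and $\x^2$.

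First I would record the two elementary inequalities that hold for \emph{every} index $j \in [m]$ and every vector $\x$: namely $\langle \x, \V^j \rangle \le \|\x\|_N$ and $-\langle \x, \V^j \rangle \le \|\x\|_N$, which is just unpacking the max formula. Applying the first of these to $\x = \x^1 + \x^2$ and $j = k$, together with bilinearity of the dot product, gives
\begin{equation*}
	\langle \x^1 + \x^2, \V^k \rangle = \langle \x^1, \V^k \rangle + \langle \x^2, \V^k \rangle = \|\x^1\|_N + \|\x^2\|_N \ge \|\x^1 + \x^2\|_N,
\end{equation*}
where the middle equality is exactly the hypothesis. On the other hand, the triangle inequality for the norm $N$ gives $\|\x^1 + \x^2\|_N \le \|\x^1\|_N + \|\x^2\|_N$. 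Combining these two, all the inequalities are forced to be equalities; in particular $\langle \x^1 + \x^2, \V^k \rangle = \|\x^1\|_N + \|\x^2\|_N = \|\x^1 + \x^2\|_N$, which is the claim.

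There is no serious obstacle here — the only thing to be slightly careful about is the sign convention. The hypothesis is stated as $\|\x^i\|_N = \langle \x^i, \V^k \rangle$ (not with absolute values), so in particular both inner products are nonnegative and the index $k$ realises the norm "with the correct sign"; this is what makes the bilinear step go through cleanly rather than merely giving $|\langle \x^1 + \x^2, \V^k\rangle| = \|\x^1 + \x^2\|_N$. I would also remark, perhaps, that the conclusion can be rephrased geometrically: if $\x^1$ and $\x^2$ both lie in the closed cone spanned by the $k$-th side of $P$ (equivalently $k(\x^i)$ may be taken equal to $k$), then so does $\x^1 + \x^2$, and the $N$-norm is linear along this cone; but the inequality chain above already proves everything needed.
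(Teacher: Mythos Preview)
Your argument is correct and is essentially the same sandwich as the paper's: both establish $\langle \x^1+\x^2,\V^k\rangle \le \|\x^1+\x^2\|_N \le \|\x^1\|_N+\|\x^2\|_N = \langle \x^1+\x^2,\V^k\rangle$, the paper by bounding each term $|\langle \x^1+\x^2,\V^{k'}\rangle|$ of the max formula individually, you by invoking the triangle inequality for $N$ abstractly. One small expository glitch: the sentence ``Applying the first of these to $\x=\x^1+\x^2$ and $j=k$\ldots'' actually yields $\langle \x^1+\x^2,\V^k\rangle \le \|\x^1+\x^2\|_N$, whereas the $\ge$ in your displayed chain is the triangle inequality you then state again ``on the other hand''---tidy that up and the write-up is clean.
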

\begin{proof}
	Indeed, for every $k' \in [m]$, we have $|\langle \x^1+\x^2, \V^{k'} \rangle| = |\langle \x^1, \V^{k'} \rangle + \langle \x^2, \V^{k'} \rangle| \le |\langle \x^1, \V^{k'} \rangle| + |\langle \x^2, \V^{k'} \rangle|$, while for $k'=k$ the inequality holds with equality, and the signs of these dot products are positive. Taking now the maximum over all $k' \in [m]$, we complete the proof. 
\end{proof}

Though the set $\G(q)\sm\{0\}$ is one-dimensional, not all of its $N$-isometric copies in the plane are collinear. However, the next lemma shows that each of them still has its `direction' in a sense that the $N$-distance between each two of its points is given by the dot product with a single vector $\V^{k}$. For an illustration, see \Cref{collinear}. The subsequent lemma then shows that there exists a whole segment of points extending a given $N$-isometric copy of $\G(q)\sm\{0\}$ to an $N$-isometric copy of $\G(q)$.

\begin{figure}[htp]
\centering
\includegraphics[width=9cm]{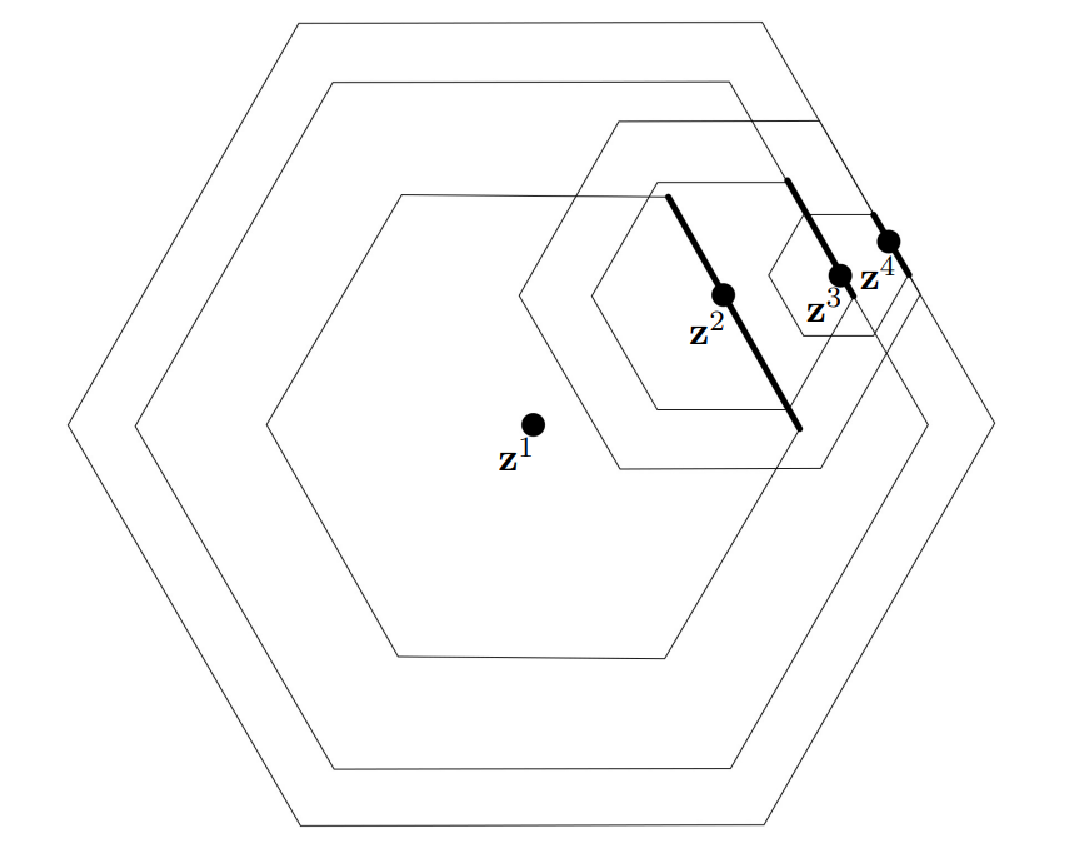}
\caption{Example of an embedding of the first four points of $\G(q)\sm\{0\}$ to the Minkowski plane equipped with a hexagonal norm. Even though the points are not collinear, they still have a direction defined by a single vector $\V^{k}$.}
\label{collinear}
\end{figure}

\begin{Lemma} \label{lem_direction}
	Let $\z^1, \z^2, \dots$ be a sequence of points in the plane such that $\|\z^i-\z^j\|_N = (q^i-q^j)/(1-q)$ for all $i<j$. Then there exists $k \in [m]$ and a sign $\sigma = \pm1$ such that $\|\z^i-\z^j\|_N = \sigma\langle \z^i-\z^j, \V^k \rangle$ for all $i<j$.
\end{Lemma}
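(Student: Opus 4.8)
The plan is to exploit the fact that the prescribed distances make the triangle inequality \emph{tight} along the sequence, and then to show that such tightness forces a common supporting direction. First I would pass to the consecutive difference vectors $\mathbf{d}_\ell \coloneqq \z^\ell - \z^{\ell+1}$, for which the hypothesis gives $\|\mathbf{d}_\ell\|_N = (q^\ell - q^{\ell+1})/(1-q) = q^\ell$. Telescoping yields $\z^i - \z^j = \sum_{\ell=i}^{j-1}\mathbf{d}_\ell$, and since $\sum_{\ell=i}^{j-1} q^\ell = (q^i - q^j)/(1-q) = \|\z^i-\z^j\|_N$, we obtain the chain of equalities
\begin{equation*}
	\Big\|\sum_{\ell=i}^{j-1}\mathbf{d}_\ell\Big\|_N = \|\z^i - \z^j\|_N = \sum_{\ell=i}^{j-1}\|\mathbf{d}_\ell\|_N,
\end{equation*}
that is, the triangle inequality holds with equality for every partial sum of consecutive differences.

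The key step is a converse to \Cref{lem_sum}: if $\|\sum_\ell \mathbf{d}_\ell\|_N = \sum_\ell\|\mathbf{d}_\ell\|_N$ and a pair $(k,\sigma) \in [m] \times \{\pm 1\}$ is chosen so that $\sigma\langle \sum_\ell \mathbf{d}_\ell, \V^k\rangle = \|\sum_\ell \mathbf{d}_\ell\|_N$ (such a pair exists by the identity $\|\x\|_N = \max_{k}|\langle \x, \V^k\rangle|$), then in fact $\sigma\langle \mathbf{d}_\ell, \V^k\rangle = \|\mathbf{d}_\ell\|_N$ for \emph{every} single $\ell$. This follows by forcing equality through
\begin{equation*}
	\sum_\ell \|\mathbf{d}_\ell\|_N = \sigma\Big\langle \textstyle\sum_\ell \mathbf{d}_\ell, \V^k\Big\rangle = \sum_\ell \sigma\langle \mathbf{d}_\ell, \V^k\rangle \le \sum_\ell |\langle \mathbf{d}_\ell, \V^k\rangle| \le \sum_\ell \|\mathbf{d}_\ell\|_N,
\end{equation*}
where the last inequality uses $|\langle \x, \V^k\rangle| \le \|\x\|_N$. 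Since the two ends coincide, every term must satisfy $\sigma\langle \mathbf{d}_\ell, \V^k\rangle = \|\mathbf{d}_\ell\|_N$, as claimed.

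Applying this to the partial sum $\z^1 - \z^j = \sum_{\ell=1}^{j-1}\mathbf{d}_\ell$ produces, for each $j$, a pair $(k_j,\sigma_j)$ realising the norm of every $\mathbf{d}_\ell$ with $\ell < j$. Since there are only $2m$ possible pairs, by the pigeonhole principle one pair $(k,\sigma)$ occurs for infinitely many $j$; for such a pair, every fixed $\ell$ admits some $j > \ell$ in this infinite set, so $\sigma\langle\mathbf{d}_\ell,\V^k\rangle = \|\mathbf{d}_\ell\|_N$ holds simultaneously for all $\ell \ge 1$. Finally, summing over $i \le \ell < j$ and using bilinearity of the dot product gives $\sigma\langle\z^i-\z^j,\V^k\rangle = \sum_{\ell=i}^{j-1}\|\mathbf{d}_\ell\|_N = \|\z^i-\z^j\|_N$ for all $i<j$, which is exactly the desired conclusion. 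I expect the only genuinely delicate point to be the passage from the per-$j$ pairs $(k_j,\sigma_j)$ to a single uniform pair valid along the whole sequence: a priori the realising direction could drift with $j$, and it is the finiteness of $[m] \times \{\pm 1\}$ together with pigeonhole that pins it down to one fixed $(k,\sigma)$.
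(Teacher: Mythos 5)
Your argument is correct, but it reaches the conclusion by a different mechanism than the paper. You decompose $\z^i-\z^j$ into the consecutive differences $\z^\ell-\z^{\ell+1}$, observe that the triangle inequality is tight on every such partial sum, extract for each $j$ a pair $(k_j,\sigma_j)$ realising the norm of all increments up to $j$ via the equality-forcing chain (which is indeed the exact converse direction of \Cref{lem_sum}), and then use the pigeonhole principle over the $2m$ possible pairs to pin down one uniform $(k,\sigma)$. The paper instead introduces the limit point $\y$ of the sequence (which exists since the sequence is Cauchy), notes that $\|\z^i-\y\|_N=q^i/(1-q)$, and chooses $k$ once and for all so that $|\langle \z^1-\y,\V^k\rangle|=\|\z^1-\y\|_N$; the three-term split $\z^1-\y=(\z^1-\z^i)+(\z^i-\z^j)+(\z^j-\y)$ then forces equality in a single displayed chain, with no pigeonhole needed. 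The limit point thus serves as a global anchor that determines the direction immediately, whereas your finite partial sums only determine it up to a choice that could a priori drift with $j$ --- a drift you correctly rule out by finiteness of $[m]\times\{\pm1\}$. Both proofs are complete; yours is marginally longer but has the small advantage of never invoking the limit of the sequence, so it applies verbatim to any finite initial segment as well.
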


\begin{proof}

	Let $\y \in \R^2$ be the limit of our sequence (which exists because $\z^1, \z^2, \dots$ is a Cauchy sequence). For every $i \in \N$, we have
	\begin{equation*}
		\|\z^i-\y\|_N = \lim_{j \to \infty} \|\z^i-\z^j\|_N = \frac{q^i}{1-q}.
	\end{equation*}
	Take an arbitrary $k \in [m]$ such that $|\langle \z^1-\y, \V^k \rangle| =  \|\z^1-\y\|_N = q/(1-q)$. Without loss of generality, assume that $\langle \z^1-\y, \V^k \rangle$ is positive. For every $i < j$, consider the following chain:
	\begin{align*}
		\frac{q}{1-q} =&\ \langle \z^1-\y, \V^k \rangle = \langle \z^1-\z^i, \V^k \rangle + \langle \z^i-\z^j, \V^k \rangle + \langle \z^j-\y, \V^k \rangle \\
		\le&\ \|\z^1-\z^i\|_N + \|\z^i-\z^j\|_N + \|\z^j-\y\|_N \\
		=&\ \frac{q-q^i}{1-q} + \frac{q^i-q^j}{1-q} + \frac{q^j}{1-q} = \frac{q}{1-q}.
	\end{align*}
	Thus, the latter inequality must hold with equality. In particular, this implies that
	\begin{equation*}
		\langle \z^i-\z^j, \V^k \rangle = \frac{q^i-q^j}{1-q} = \|\z^i-\z^j\|_N, 
	\end{equation*}
	as desired.
\end{proof}

Finally, we prove the following lemma which we will later use to extend a given monochromatic sequence of points to a copy of our infinite set.

\begin{Lemma} \label{lem_extend}
Let $\z^1, \z^2, \dots$ be a sequence of points in the plane such that  $\|\z^i-\z^j\|_N = (q^i-q^j)/(1-q)$ for all $i<j$, and $k \in [m]$ be from the statement of \Cref{lem_direction}. Let $I$ be the side of $P$ from the $k$-th pair of its opposite sides that contains a unit vector $\U \coloneqq (\z^1-\z^2)/q$. Then adding an arbitrary point $\z^0$ from the translated segment $\z^1+I$ to the sequence $\z^1, \z^2, \dots$ yields an $N$-isometric copy of $\G(q)$.
\end{Lemma}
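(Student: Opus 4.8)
The plan is to show that the new point $\z^0$ together with $\z^1, \z^2, \dots$ forms an $N$-isometric copy of $\G(q) = \{0, 1, 1+q, 1+q+q^2, \dots\}$, where $\z^0$ plays the role of $0$ and each $\z^i$ ($i \ge 1$) plays the role of $1 + q + \dots + q^{i-1} = (1-q^i)/(1-q)$. Since the pairwise distances among $\z^1, \z^2, \dots$ are already correct by hypothesis, the only thing left to verify is that $\|\z^0 - \z^i\|_N = (1-q^i)/(1-q)$ for every $i \ge 1$, matching the distance from $0$ to the $i$-th nonzero element of $\G(q)$.

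First I would set up coordinates using \Cref{lem_direction}: there is an index $k$ and a sign, which we may take to be $+1$ after orienting $\V^k$, such that $\langle \z^i - \z^j, \V^k\rangle = (q^i - q^j)/(1-q)$ for all $i < j$. In particular $\z^1 - \z^2 = q\,\U$ where $\U$ is a unit vector lying on the side $I$ of $P$, so $\langle \U, \V^k\rangle = 1$. Writing $\z^0 = \z^1 + \U'$ for an arbitrary $\U' \in I$, note that every point of $I$ is a unit vector (it lies on the boundary of $P$) and satisfies $\langle \U', \V^k\rangle = 1$, since $I$ is contained in the line $\langle \x, \V^k\rangle = 1$. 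Now I would compute, for each $i \ge 1$,
\begin{equation*}
\langle \z^0 - \z^i, \V^k\rangle = \langle \U', \V^k\rangle + \langle \z^1 - \z^i, \V^k\rangle = 1 + \frac{q - q^i}{1-q} = \frac{1-q^i}{1-q}.
\end{equation*}
This already gives the lower bound $\|\z^0 - \z^i\|_N \ge (1-q^i)/(1-q)$, so the remaining task is the matching upper bound.

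For the upper bound, the natural idea is to use the triangle inequality together with \Cref{lem_sum}. We have $\z^0 - \z^i = \U' + (\z^1 - \z^i)$. Both summands have $N$-norm equal to their $\V^k$-dot product: $\|\U'\|_N = 1 = \langle \U', \V^k\rangle$ because $\U' \in I \subset \partial P$, and $\|\z^1 - \z^i\|_N = (q-q^i)/(1-q) = \langle \z^1 - \z^i, \V^k\rangle$ by the choice of sign in \Cref{lem_direction}. Hence \Cref{lem_sum} applies and yields $\|\z^0 - \z^i\|_N = \langle \z^0 - \z^i, \V^k\rangle = (1-q^i)/(1-q)$ exactly. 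Combined with the already-correct distances among $\z^1, \z^2, \dots$, this shows the map sending $0 \mapsto \z^0$ and $(1-q^i)/(1-q) \mapsto \z^i$ is an $N$-isometry, i.e. $\{\z^0, \z^1, \z^2, \dots\}$ is an $N$-isometric copy of $\G(q)$.

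**The main obstacle** I anticipate is purely bookkeeping: making sure the sign conventions are consistent between the statement of \Cref{lem_direction} (which fixes a sign $\sigma$ so that $\|\z^i - \z^j\|_N = \sigma\langle \z^i - \z^j, \V^k\rangle$) and the orientation of $\V^k$ used to say $\langle \U, \V^k\rangle = +1$ for $\U = (\z^1 - \z^2)/q$ on the side $I$. Since $\z^1 - \z^2 = q\,\U$ and $\sigma\langle \z^1 - \z^2, \V^k\rangle = \|\z^1 - \z^2\|_N = q > 0$, we get $\sigma\langle \U, \V^k\rangle = 1$; choosing the sign of $\V^k$ so that $\sigma = +1$ makes everything line up, and then $I$ is precisely the side of $P$ on the line $\langle \x, \V^k\rangle = 1$, which is exactly the side named in the lemma's hypothesis. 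The only other small point to check is that $\z^0 \ne \z^i$ for all $i$ and that the distances are strictly increasing as needed, both of which are immediate from the formula $\|\z^0 - \z^i\|_N = (1-q^i)/(1-q) > 0$ and the monotonicity of $q^i$. No genuine difficulty remains beyond this.
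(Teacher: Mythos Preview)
Your proposal is correct and follows essentially the same approach as the paper: normalise the sign from \Cref{lem_direction} to $\sigma=+1$, write $\z^0=\z^1+\U'$ with $\U'\in I$ so that $\|\U'\|_N=\langle \U',\V^k\rangle=1$, decompose $\z^0-\z^i=\U'+(\z^1-\z^i)$, and apply \Cref{lem_sum} to conclude $\|\z^0-\z^i\|_N=(1-q^i)/(1-q)$. The only cosmetic difference is that you mention the lower bound separately before invoking \Cref{lem_sum}, whereas the paper goes straight to the equality.
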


\begin{proof}
	We can assume without loss of generality that $\sigma$ from the statement of \Cref{lem_direction} equals $1$, i.e. all the scalar products $\langle \z^i-\z^j, \V^k \rangle$, $i<j$, are positive. Then note that $\|\U\|_N = \|\z^1-\z^2\|_N/q = 1$, and similarly $\langle \U, \V^k \rangle = \langle \z^1-\z^2, \V^k \rangle/q = 1$. Therefore, $\U$ indeed belongs to a side $I$ of $P$ from the $k$-th pair of its opposite sides. Given any other point $\U' \in I$ on this side, it is clear that we have $\|\U'\|_N = \langle \U', \V^k \rangle = 1$ as well. Consider a point $\z^0 = \z^1+\U'$. On the one hand, we have $\|\z^0-\z^1\|_N = \langle \z^0-\z^1, \V^k \rangle = 1$ by construction. On the other hand, given $i>1$, \Cref{lem_direction} implies that $\|\z^1-\z^i\|_N = \langle \z^1-\z^i, \V^k \rangle = (q-q^i)/(1-q)$. Hence, it follows from \Cref{lem_sum} that
	\begin{equation*}
		\|\z^0-\z^i\|_N = \|(\z^0-\z^1)+(\z^1-\z^i)\|_N = \langle (\z^0-\z^1)+(\z^1-\z^i), \V^k \rangle = 1+\frac{q-q^i}{1-q} = \frac{1-q^i}{1-q}.
	\end{equation*}
	In particular, this shows that the sequence $\z^0, \z^1, \dots$ is indeed an $N$-isometric copy of $\G(q)$.
\end{proof}

\subsection{Proof of Theorem~\ref{th_polygon}}
As before, let
$	\G(q) \coloneqq \{0, \, 1, \, 1+q, \, 1+q+q^2, \, \dots\} $. We need to show that for a given polygonal norm $N$ in the plane we have $\chi\big(\R^2, N, \G(q)\big) \ge 3$ whenever $q$ is sufficiently small.

Assume the contrary and fix a polygonal norm $N$, a positive $q < \lambda/(1+\lambda)$ where $\lambda$ is the smallest $N$-distance between two
consecutive vertices of its unit disc, and a 
red-blue colouring of the plane such that no $N$-isometric copy of $\G(q)$ is monochromatic.

In the first step of the proof, we show that there exists a monochromatic segment in the plane. Indeed, pick an arbitrary vector $\x \in \R^2$ of unit norm such that the line $\{t\x: t \in \R\}$ does not pass through a vertex of $P$. Then the dot product $|\langle \x, \V^{k(\x)} \rangle|$ is strictly larger than $|\langle \x, \V^{k} \rangle|$ for every $k \neq k(\x)$. Thus there exists some sufficiently small $\tau>0$ such that $\|\x+t\W^{k(\x)}\|_N = \|\x\|_N = 1$ for all $t \in [-\tau; \tau]$. For every $i \ge 0$, we define a segment $J_i$ by
\begin{equation*}
	J_i = \big\{(1+q+\dots+q^{i-1})\x + t\W^{k(\x)}: |t| \le \tau q^{i}/2\big\}.
\end{equation*}
Assume that no segment is monochromatic. Then, all segments $J_i$, $i \ge 0$ must contain both red and blue points, so we can pick a red point $\z^i$ from each $J_i$. We claim that the sequence $\z^0, \z^1, \dots$ is an $N$-isometric copy of $\G(q)$, which would yield a contradiction. To see this, it is sufficient to check that for all $0 \le i < j$, we have $\|\z^j-\z^i\|_N = q^{i}+\dots+q^{j-1}$. Observe that
\begin{equation*}
	\z^j-\z^i = (q^{i}+\dots+q^{j-1})\x + t_{ij}\W^{k(\x)}, \mbox{ where } |t_{ij}| \le \tau(q^{i}+q^{j})/2 < \tau q^i \le \tau (q^{i}+\dots+q^{j-1}).
\end{equation*}
Hence, we indeed have
\begin{equation*}
	\|\z^j-\z^i\|_N = \|(q^{i}+\dots+q^{j-1})\x + t_{ij}\W^{k(\x)}\|_N = \|(q^{i}+\dots+q^{j-1})\x\|_N = q^{i}+\dots+q^{j-1},
\end{equation*}
as desired. So, for the rest of the proof we assume that there is a monochromatic segment.

Let $s \ge 1$ be the minimum number such that there exists an $N$-isometric monochromatic 
copy of $\G(q)$ minus its first $s$ points.  
In other words, we assume that some $N$-isometric copy of a `scaled' progression  $q^s\G(q)$ is monochromatic while none of $q^{s-1}\G(q)$ is. It is easy to see that the existence of a monochromatic segment we observed in the previous paragraph implies that $s$ is a well-defined integer. In the rest of this section, we complete the proof of Theorem~\ref{th_polygon} by obtaining a contradiction with the minimality of $s$. For the sake of simplicity, we provide our argument only for $s=1$, while it can be easily generalised. 
(One can actually assume that $s=1$ without loss of generality, since scaling of our colouring 
by the factor of $q^{s-1}$ produces a monochromatic $N$-isometric copy of $q\G(q)$, 
but not that of $\G(q)$.)

So, let $\z^1, \, \z^2, \, \dots $ be a red $N$-isometric copy of  $\G(q)\sm\{0\}$ (in this particular order). Since it cannot be completed by a point $\z^0$ to a red $N$-isometric copy of  $\G(q)$, \Cref{lem_extend} implies that there exists $k_0 \in [m]$ and an entirely blue segment $I_0$ parallel to $\W^{k_0}$. The \textit{length} of this segment, i.e.\ the $N$-distance between its endpoints, is equal to $\|\W^{k_0}\|_N = \lambda_{k_0} \ge \lambda > q/(1-q)$. Hence, we can inscribe into it a blue $N$-isometric copy of $\G(q)\sm\{0\}$. For definiteness, we inscribe it in such a way that the limit point of the copy coincides with the lexicographically greater endpoint of the segment. Applying \Cref{lem_extend} again, we see that some particular segment $I_1$ parallel to $\W^{k_1}$, where $k_1 = k(\W^{k_0})$, must be entirely red. Proceed similarly for every $j > 1$, finally we obtain a monochromatic segment $I_j$ parallel to 
$\W^{k_j}$ and of length $\lambda_{k_j}$, where $k_j = k(\W^{k_{j-1}})$. For an illustration, see \Cref{fig_thm2}.

\begin{figure}[htp]%
    \centering
    \subfloat[ Let $\z^1, \z^2, \dots \subseteq I_0$ be a red $N$-isometric copy of $\G(q)\sm\{0\}$ that cannot be extended to a red $N$-isometric copy of $\G(q)$. By \Cref{lem_extend}, there exists a blue segment $I_1$ of length at least $\lambda$.]{{\includegraphics[width=7cm]{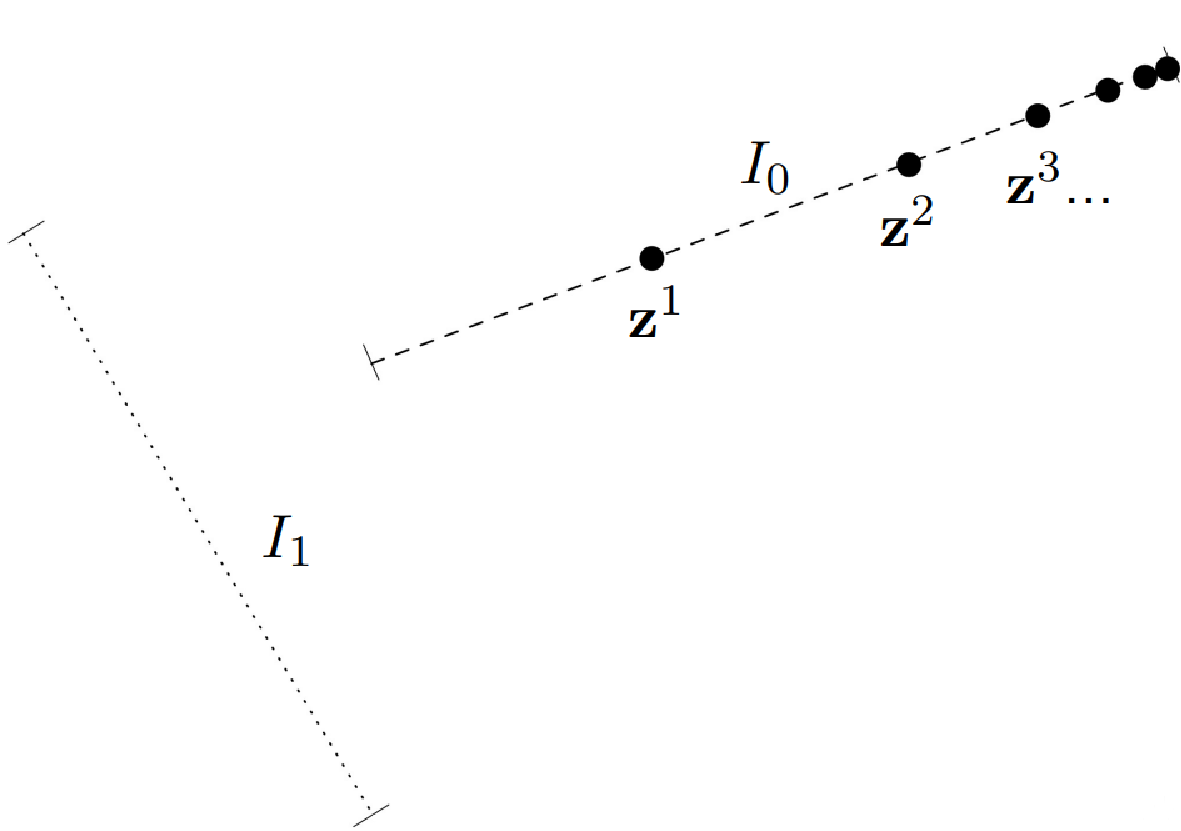} }}%
    \qquad
    \subfloat[As the length of $I_1$ is at least $\lambda$, we can inscribe into it a blue $N$-isometric copy of $\G(q)\sm\{0\}$. If it cannot be extended to a blue $N$-isometric copy of $\G(q)$, then again, by \Cref{lem_extend}, there exists a red segment $I_2$.]{{\includegraphics[width=8.9cm]{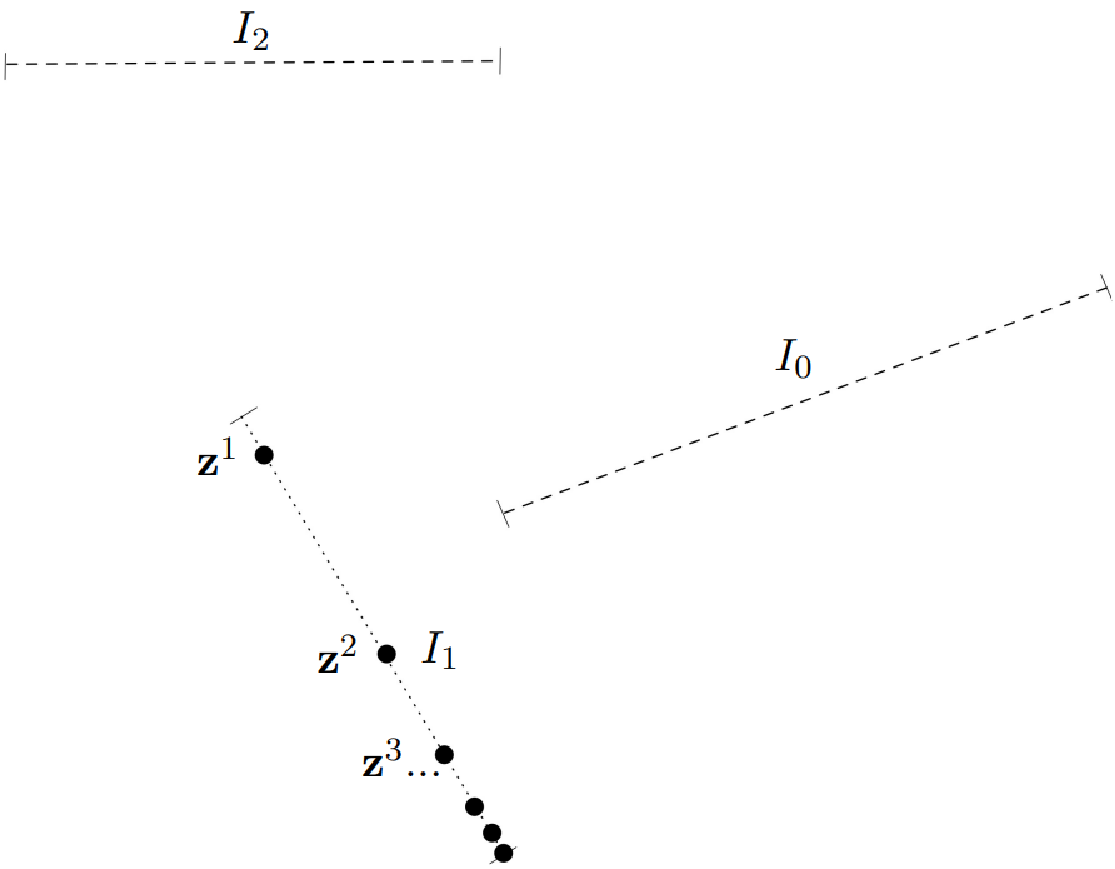} }}%
    \caption{Illustration of finding monochromatic segments $I_0$, $I_1, \dots$ in the Minkowski plane.}%
    \label{fig_thm2}%
\end{figure}

Since there are only finitely many possible directions, namely exactly $m$, we have $k \coloneqq k_{j_1} = k_{j_2}$ for some $j_1<j_2$. In other words, there exists a monochromatic segment $I_1' \coloneqq I_{j_1}$ parallel to $\W^k$ and of length $\lambda_{k}$. Moreover, if we inscribe into it an $N$-isometric copy $\M'$ of $\G(q)\sm \{0\}$ in  such a way that the limit point of $\M'$ coincides with the lexicographically greater endpoint of $I_1'$, and apply \Cref{lem_extend} several times, namely exactly $(j_2-j_1)$ times,
then we obtain another monochromatic segment $J_1'$ parallel to $I_1'$ and of the same length. Observe that the position of $J_1'$ linearly depends on the position of $\M'$ as a composition of several applications of \Cref{lem_extend} each of which is linear. In other words, a translation of $\M'$ results in the same translation of $J_1'$. Hence, as $\M'$ slides from one endpoint of $I_1'$ to another, the segment $J_1'$ slides along itself, resulting in a monochromatic segment $I_2'$ which is still parallel to $\W^k$ but of length $\lambda_{k}+ (\lambda_{k}-q/(1-q)) > \lambda_{k}$. Proceeding in the same vein, we find a family of monochromatic segments $I_1', \, I_2', \, \dots$ whose lengths form an increasing arithmetic progression with the common difference of $\lambda_{k}-q/(1-q)$. After finitely many steps, we get a monochromatic segment $I_j'$ of length larger than $1/(1-q)$. Inscribing into it an $N$-isometric copy of $\G(q)$ yields the desired contradiction.

\section{Concluding remarks, open problems}\label{sec:conc}

\noindent \textbf{Strictly convex norms.}
It is tempting to conjecture that for $n\ge 2$ and any infinite $\M \subset \R^n$, the equality $\chi(\R^n, N, \M)=2$ holds for a relatively wide class of norms, in particular, for strictly convex $N$. However, the only upper bound we have in the general case is based on the trivial inequality $\chi(\R^n, N, \M) \le \chi(\R^n, N)$.
In the planar case, Chilakamarri~\cite{Chil} showed that $\chi(\R^2, N) \le 7$ for every norm $N$. In higher dimension, Kupavskii~\cite{Kup}, improving on the result of F\"uredi and Kang~\cite{FK} proved that $\chi(\R^n, N) \le \big(4+o(1)\big)^n$, where the $o(1)$-term depends only on $n$. 
For $\ell_p$-norms, there is a better exponential upper bound \cite{Kup}.

We can adapt our argument from \Cref{sec2} for norms in higher dimension that satisfy a certain analogue of the conditions of \Cref{garibaldi} and obtain the following result.

\begin{Theorem} \label{garibaldi_norm}
Let $N$ be a strictly convex norm on $\R^n$. Suppose that there exist $m_0, \, k_0 \in \N$ such that the bisectors of any $m_0$ distinct pairs of points in $\mathbb{R}^n$ share at most $k_0$ points. Then $\chi(\R^n, N, \M)=2$ for all infinite $\M \subset \R^n$. 
\end{Theorem}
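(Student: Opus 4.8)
The plan is to mirror the proof of \Cref{th_lp_plane_strong}, replacing its planar input \Cref{garibaldi} by the hypothesis on bisectors and redoing the combinatorics in arbitrary dimension. As before, rather than merely $2$-colouring, I aim to produce a polychromatic $\N$-colouring of the hypergraph $\mathcal{H}(\R^n, N, \M)$, which immediately gives $\chi(\R^n, N, \M) = 2$. When $\M$ is unbounded, \Cref{unboundedM} finishes directly, so the whole difficulty lies in the bounded case. There I would first apply \Cref{distDist} to pass to an infinite $\M' \subseteq \M$ all of whose pairwise $N$-distances are distinct; since every $N$-isometric copy of $\M$ contains a copy of $\M'$, any polychromatic $\N$-colouring of $\mathcal{H}(\R^n, N, \M')$ is also one of $\mathcal{H}(\R^n, N, \M)$, so I may replace $\M$ by $\M'$ and write $\M = \{\z^1, \z^2, \dots\}$.

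The heart of the argument is to verify that $\mathcal{H} = \mathcal{H}(\R^n, N, \M)$ meets the hypothesis of \Cref{infcommon} for $k \coloneqq \max\{k_0 + 1, \, 3\}$. Exactly as in the planar proof, distinctness of distances forces any two edges $e, f$ sharing at least three points to share them with matching indices, so each edge carries a well-defined indexing $e(1), e(2), \dots$, the common points satisfy $e(i) = f(i)$, and $e \setminus f = \{e(j) : e(j) \neq f(j)\}$. The key geometric input is that whenever $e(j) \neq f(j)$, every common point $e(i) = f(i)$ obeys
\begin{equation*}
    \|e(i) - e(j)\|_N = \|\z^i - \z^j\|_N = \|f(i) - f(j)\|_N = \|e(i) - f(j)\|_N,
\end{equation*}
so it lies on the bisector $B_N\big(e(j), f(j)\big)$; hence all $\ge k$ common points lie simultaneously on every bisector indexed by a ``discrepancy'' index $j$.

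Two counting steps then conclude, each a direct analogue of the planar one with $m_0 - 1$ replacing $1$ and $k_0$ replacing the Garibaldi constant. First, for any $f$ with $|e \cap f| \ge k$ I would show $|e \setminus f| \le m_0 - 1$: if there were $m_0$ discrepancy indices $j_1, \dots, j_{m_0}$, then the pairs $\big(e(j_q), f(j_q)\big)$ are distinct (distinct first coordinates) pairs of distinct points, and their $m_0$ bisectors would share the $\ge k = k_0 + 1 > k_0$ common points of $e$ and $f$, contradicting the hypothesis. Second, to obtain $|e \setminus e'| \le m_0 - 1$ I would suppose $m_0$ indices $j_1, \dots, j_{m_0}$ with $e(j_q) \notin e'$, pick witnesses $f_q$ having $|e \cap f_q| \ge k$ and $e(j_q) \neq f_q(j_q)$, and note $\big|e \setminus \bigcap_q f_q\big| \le m_0(m_0 - 1)$; since $e$ is infinite, $e \cap \bigcap_q f_q$ still has at least $k$ points, each lying on all $m_0$ distinct bisectors $B_N\big(e(j_q), f_q(j_q)\big)$, again contradicting the hypothesis. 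Thus $e'$ is cofinite in $e$, hence infinite for every edge, and \Cref{infcommon} supplies the sought polychromatic $\N$-colouring.

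The step I expect to need the most care is realising that, in contrast with the plane, no split into a collinear subset and a subset in general position is required. In \Cref{th_lp_plane_strong} that split was forced because \Cref{garibaldi} governs only \emph{non-linear} bisectors, so an infinite collinear subset had to be treated separately via \Cref{ErdosHajnal}. Here the hypothesis bounds the common points of the bisectors of \emph{any} $m_0$ distinct pairs, with no linearity proviso, so the single uniform argument above applies; I need only check that the $m_0$ pairs fed into the hypothesis are genuinely distinct and consist of distinct points, both of which follow since the $e(j_q)$ are distinct and each $j_q$ is a discrepancy index. Strict convexity itself is not invoked in this counting — it belongs to the ambient setting that makes the bisector hypothesis tenable, e.g.\ by keeping the bisectors lower-dimensional.
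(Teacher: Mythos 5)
Your proposal is correct and is precisely the adaptation of the proof of \Cref{th_lp_plane_strong} that the paper intends for \Cref{garibaldi_norm} (the paper only sketches this, saying the argument of \Cref{sec2} ``can be adapted''): unbounded case via \Cref{unboundedM}, reduction to distinct pairwise distances via \Cref{distDist}, and verification of the hypothesis of \Cref{infcommon} with $k=\max\{k_0+1,3\}$, using the bisector assumption in place of \Cref{garibaldi}; your observation that the collinear/general-position split is no longer needed is also right, since the hypothesis here carries no non-linearity proviso. The only step worth an extra line in a final write-up is that the $m_0$ pairs are distinct as \emph{unordered} pairs (e.g.\ $f_1(j_1)=e(j_2)$ is impossible because common points of $e$ and $f_1$ carry matching indices), the same implicit point as in the planar proof.
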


Observe that 
$\ell_p$-norms on $\R^n$
fail to satisfy the conditions of \Cref{garibaldi_norm} for $1<p<\infty$ and $n\ge 2$, 
since arbitrary many pairs of points can have the same line or hyperplane as their bisectors. 
If this situation is exceptional 
for a given norm, then we can try to describe all these `special bisectors' and treat them separately. 
For instance, \Cref{garibaldi} implies that the only special $\ell_p$-bisectors in the plane are lines. 
In addition, since $\ell_p$-norms are strictly convex for $1<p<\infty$, we know that if 
an infinite 
$\M\subset \R^2$ lies on a line, then every $\ell_p$-isometric copy of $\M$ lies on a line as well, and the argument is secured.

In the Euclidean case, the bisectors are hyperplanes. Moreover, if an infinite $\M\subset \R^n$ lies on a  hyperplane, then every $\ell_2$-isometric copy of $\M$ lies on a hyperplane as well, and the argument leads to the desired equality $\chi(\R^n, \ell_2, \M)=2$.

To extend these ideas at least to all $\ell_p$-norms on $\R^3$, one first needs a better understanding of the special bisectors, e.g.\ of planes.
Let $1<p<\infty$ and $\M \subset \R^3$ be an infinite subset of a horizontal plane. Does every $\ell_p$-isometric copy of $\M$ lie on a plane as well? 
Or at least, does 
every $\ell_p$-isometric copy of $\M$ contain an 
infinite subset that 
lie on a plane? 

\vspace{3mm}
\noindent \textbf{Polygonal norms.}
Another conjecture is that for $n \ge 3$ and every polygonal norm $N$ on $\R^n$, there exists an infinite $\M$, 
perhaps a geometric progression $\G(q)$ for a sufficiently small $q$, 
such that $\chi(\R^n, N, \M) \ge n+1$. As we discussed in the introduction, 
the latter holds for the $\ell_\infty$-norm, see \cite[Section~2.3]{FKS}. 
In this norm the unit ball is a hypercube, all of its faces are hypercubes of one less dimensions, 
and the induced norms on the faces are also the $\ell_\infty$-norms. 
These properties allowed the authors of \cite{FKS} to use induction on the dimension. 
We believe that it should be possible to combine the ideas from \cite{FKS} 
with those from our \Cref{sec3} to verify the above conjecture. 
However, in this general case, the induction step for the polygonal norm $N$ on $\R^n$ would 
utilise several polygonal norms on $\R^{n-1}$ 
induced on the faces of the unit $N$-ball.

For some special sequences of polygonal norms $N_n$ on $\R^n$, $n \in \N$, 
e.g.\ for the $\ell_1$-norm which exists in every dimension, it is natural to ask if there exists a single infinite $\M$ such that $\chi(\R^n, N_n, \M)$ tends to infinity with $n$. The only known result of this sort so far was obtained in \cite[Section~2.3]{FKS} and covers the case of the 
$\ell_\infty$-norms.

We hesitantly suspect that $\chi(\R^n, N, \M)$ cannot be larger than $n+1$ for any polygonal norm $N$ or $\R^n$ and any infinite $\M \subset \R^n$, since this upper bound was obtained in \cite[Section~2.3]{FKS} for the $\ell_\infty$-norm. A direct application of their argument to this general problem leads to the upper bound $\chi(\R^n, N, \M) \le m+1$, where $2m$ is the number of facets of the $N$-ball. 
Note that $m$ can be arbitrary large even if the dimension $n$ is fixed. 
The best upper bound we have as a function of $n$ is the trivial $\chi(\R^n, N, \M) \le \chi(\R^n, N)$. Most likely it is very far from the truth.  
Note that the general upper bound $\chi(\R^2, N) \le 7$ has been improved to  
$\chi(\R^2, N) \le 6$ for many special polygonal norms including those whose unit ball is a regular polygon with even and at most $22$ sides, see~\cite{Geher}.

\vspace{3mm}
\noindent \textbf{Mixed norms.}
We claim that our argument from \Cref{sec3} works essentially without any changes for the following class of norms in the plane (see \Cref{fig_mixnorm}).

\begin{Theorem} \label{mixed_norm}
    Let $N$ be a norm in the plane. Suppose that there exist two non-zero vectors $\x^1, \, \x^{2}$ 
    with the following properties:
    (i) the boundary of the unit $N$-ball contains line segments parallel to $\x^1$ and $\x^2$; 
    (ii) a line passing through the origin that is parallel to $\x^1$ (resp. $\x^2$) intersects the line segments parallel to $\x^{2}$ (resp. $\x^1$) on the boundary of the unit $N$-ball. Then $\chi\big(\R^2, N, \G(q)\big) \ge 3$ for all sufficiently small positive $q$.
\end{Theorem}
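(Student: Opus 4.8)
The plan is to reproduce the proof of \Cref{th_polygon} almost verbatim, with the two distinguished segments of hypothesis~(i) playing the role of a pair of opposite sides of a polygon. Denote by $S_1, S_2$ the maximal segments of the boundary of the unit $N$-ball that are parallel to $\x^1, \x^2$, let $\W^1, \W^2$ join the endpoints of $S_1, S_2$, and let $\V^1, \V^2$ be the linear functionals of dual norm $1$ that equal $1$ on the supporting lines of $S_1, S_2$. First I would verify that \Cref{lem_sum}, \Cref{lem_direction} and \Cref{lem_extend} survive unchanged as long as every direction under consideration is a unit vector of $S_1\cup S_2$: none of their proofs uses polygonality, only that $\langle\,\cdot\,,\V^k\rangle$ is a supporting functional of $N$ of dual norm $1$ attaining the norm on the flat piece $S_k$. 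Concretely, one replaces the identity $\|\x\|_N=\max_{k}|\langle \x,\V^k\rangle|$ by the Hahn--Banach supporting functional at the relevant boundary point, and the arguments carry over word for word.

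Hypothesis~(ii) is exactly what makes the class of isometric copies with direction in $S_1\cup S_2$ closed under the basic operation of the proof. Indeed, (ii) states that the line through the origin parallel to $\W^1$ meets $S_2$ and the line parallel to $\W^2$ meets $S_1$, that is, $k(\W^1)=2$ and $k(\W^2)=1$. Hence, starting from a copy whose direction vector lies in $S_2$, \Cref{lem_extend} yields a segment parallel to $\W^2$; inscribing a collinear copy into it produces a copy whose direction is the unit vector along $\W^2$, which by (ii) lies in $S_1$; extending this gives a segment parallel to $\W^1$, and so on. The two directions therefore alternate with period two and one never leaves $S_1\cup S_2$.

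With this established I would run the scheme of Section~\ref{sec3}, assuming towards a contradiction that no copy of $\G(q)$ is monochromatic. Taking $\x$ in the relative interior of $S_1$, the first step of that proof applies verbatim and shows that either some segment $J_i$ is monochromatic, or one assembles a monochromatic copy of $\G(q)$ with direction in $S_1$, contradicting the assumption; so a monochromatic segment parallel to $\x^1$ exists. Inscribing short collinear copies into it (which, being parallel to $\x^1$, have direction in $S_2$ by (ii)) shows that the least $s$ for which some monochromatic copy of $\G(q)$ minus its first $s$ points has direction in $S_1\cup S_2$ (i.e.\ its unit direction vector $(\z^1-\z^2)/q$ lies in $S_1\cup S_2$) is finite and at least $1$. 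Rescaling by $q^{s-1}$, a homothety that preserves directions and hence the class $S_1\cup S_2$, I may assume $s=1$: there is a monochromatic copy $C$ of $\G(q)\sm\{0\}$ with direction in $S_1\cup S_2$, while no copy of $\G(q)$ with direction in $S_1\cup S_2$ is monochromatic.

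The remainder is the iteration and sliding argument of Section~\ref{sec3}, and the one genuinely new point — which I expect to be the main obstacle — is to see that each extension segment is now \emph{entirely} of the opposite colour. If $C$ has direction in $S_k$, then by \Cref{lem_extend} every point of $\z^1+S_k$ completes $C$ to a copy of $\G(q)$ whose direction functional is again $\V^k$; all these completions lie in the class $S_1\cup S_2$, so none may repeat the colour of $C$, forcing the whole segment $\z^1+S_k$ to be opposite-coloured. This is precisely why the minimality must be imposed over the class $S_1\cup S_2$ rather than over collinear copies alone: a completion of a collinear copy is in general not collinear, so restricting to collinear copies would forbid only a single completion point. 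Since the resulting monochromatic segments have the full lengths $\|\W^1\|_N$ and $\|\W^2\|_N$, both exceeding $q/(1-q)$ once $q$ is small enough, the period-two alternation supplies two segments of equal direction, colour and length, and the affine sliding argument then grows monochromatic segments by the fixed positive amount $\|\W^k\|_N-q/(1-q)$ per step. After finitely many steps a monochromatic segment of length exceeding $1/(1-q)$ appears; inscribing a full copy of $\G(q)$ into it yields a monochromatic copy with direction in $S_1\cup S_2$, contradicting the choice of $s$ and finishing the proof.
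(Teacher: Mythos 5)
Your proposal is correct and follows exactly the route the paper intends: the paper gives no proof of \Cref{mixed_norm} beyond the one-line claim that the argument of \Cref{sec3} works essentially without changes, and your write-up is precisely that adaptation, carried out in more detail than the paper provides. You correctly isolate the only genuinely new points — that \Cref{lem_sum}, \Cref{lem_direction} and \Cref{lem_extend} use nothing beyond the supporting functionals of the two flat pieces, that hypothesis~(ii) makes the two directions alternate with period two, and that the minimality of $s$ must be taken over copies whose direction lies in $S_1\cup S_2$ so that every completion stays in that class.
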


\begin{figure}[htp]
\centering
\includegraphics[width=10cm]{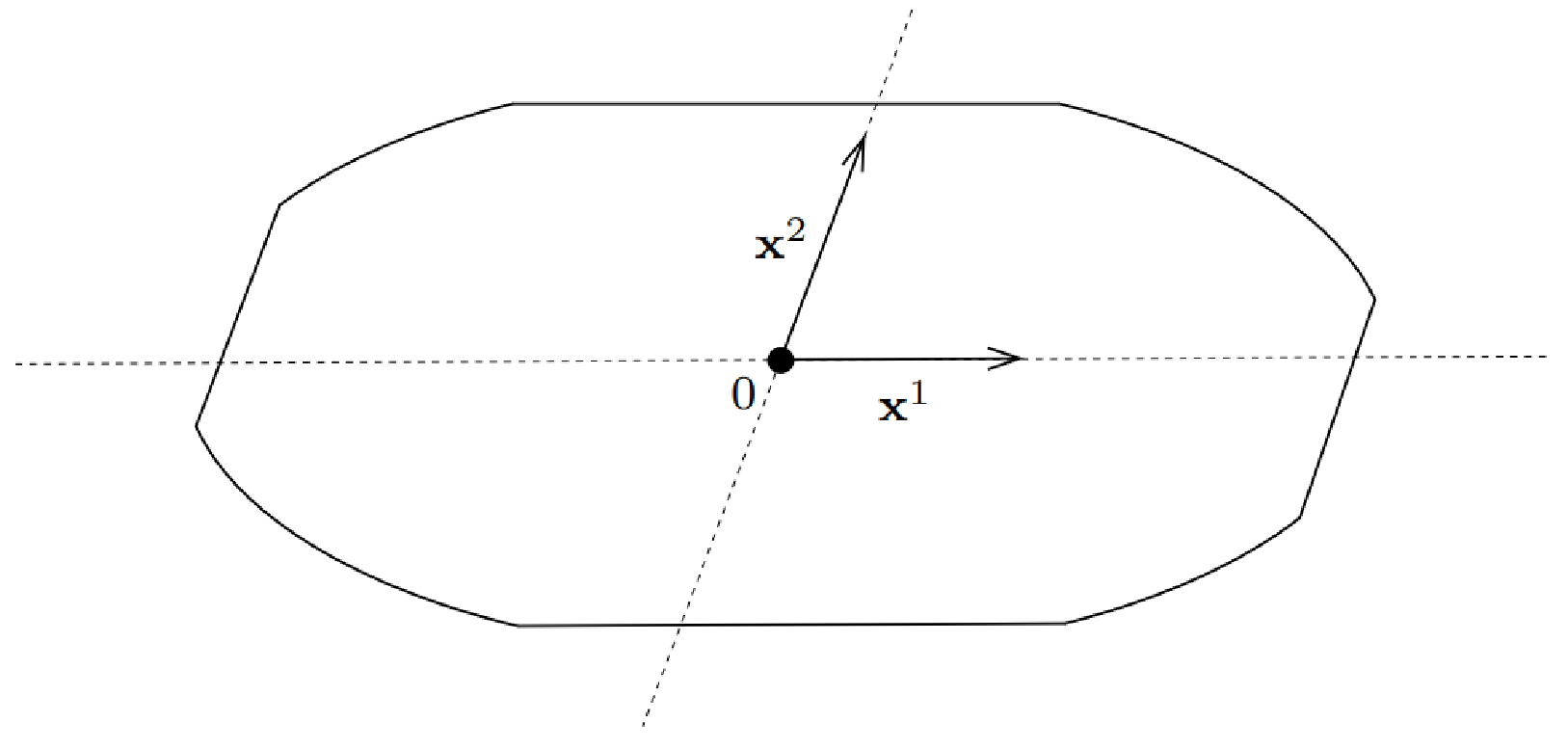}
\caption{An example of a mixed
norm that satisfies the conditions of \Cref{mixed_norm}.}
\label{fig_mixnorm}
\end{figure}

We are not brave enough to conjecture what happens if a `mixed-norm' fails to satisfy the conditions of \Cref{mixed_norm}.

\vspace{5mm}

\noindent
{\bf \large Acknowledgements.} We thank the anonymous referees for their valuable suggestions. We also thank Peter Allan for his suggestions to improve the wording of the introduction. The authors were supported by ERC Advanced Grant `GeoScape' No.\ 882971. Panna Geh\'er was also supported by the Lend\"ulet Programme of the Hungarian Academy of Sciences -- grant number LP2021-1/2021. G\'eza T\'oth was also supported by the National Research, Development and Innovation Office, NKFIH, K-131529.

{\small }

\end{document}